\newtheorem{theorem}{Theorem}
\newtheorem{acknowledgement}[theorem]{Acknowledgement}
\newtheorem{corollary}[theorem]{Corollary}
\newtheorem{lemma}[theorem]{Lemma}
\newtheorem{proposition}[theorem]{Proposition}
\begin{document}

\title[The Solvability of Inverse Problem]
{Necessary and Sufficient Conditions for the Solvability of Inverse Problem for a Class of
Dirac Operators}
\author[Kh. R. Mamedov and O. Akcay]{Kh. R. Mamedov$^{1,*}$ and O. Akcay$^{2}$} 
\address{$^{1,2}$Science and Letters Faculty, Mathematics Department,
Mersin University, 33343, Turkey}
\thanks{$^{*}$corresponding author}
\email{$^{1}$hanlar@mersin.edu.tr and $^{2}$ozge.akcy@gmail.com}
\keywords{Dirac operator, eigenvalues and normalizing numbers, inverse problem, necessary and sufficient conditions}
\subjclass[2010]{34A55; 34L40}
\date{}
\maketitle
\begin{abstract}
In this paper, we consider a problem for the first order 
Dirac differential equations system with spectral
parameter dependent in boundary condition. The asymptotic behaviors of
eigenvalues, eigenfunctions and normalizing numbers of this system are investigated. 
The expansion formula with respect to eigenfunctions is obtained and
Parseval equality is given. The main theorem on necessary and sufficient conditions for the solvabilty of 
inverse problem is proved and the algorithm of reconstruction of potential from spectral data (the sets of eigenvalues
and normalizing numbers) is given.
\end{abstract}

\section{Introduction}

We consider the boundary value problem generated by one dimensional Dirac differential equations system
\begin{equation}\label{1}
By'+\Omega(x)y=\lambda y, \ \ \ 0<x<\pi
\end{equation}
with boundary condition
\begin{equation}\label{2}
\begin{array}{c}
U(y):= y_{1}(0)=0, \\
V(y):= \left(\lambda+h_{1}\right)y_{1}(\pi)+h_{2}y_{2}(\pi)=0,
\end{array}
\end{equation}
where
\[
B=\left( 
\begin{array}{cc}
0 & 1 \\ 
-1 & 0
\end{array}
\right) ,\ \ \ \ \Omega \left(x\right) =\left( 
\begin{array}{cc}
p(x) & q(x) \\ 
q(x) & -p(x)
\end{array}
\right) ,\ \ \ \ y=\left( 
\begin{array}{c}
y_{1}\left( x\right) \\ 
y_{2}\left( x\right)
\end{array}
\right), 
\]
$p\left( x\right),$ $q\left( x\right) $ are real valued functions in $L_{2}(0,\pi),$ 
$\lambda$ is a spectral parameter, $h_{1}$ and $h_{2}$ are real numbers, $h_{2}>0.$

In the finite interval, the inverse problems for Dirac differential equations system by different
spectral characteristics (e.g., one spectrum and norming constants or two spectra or spectral function) 
are solved in \cite{Ga-Dz, Dz, Al-Hr-My, My-Pu, Ho, Mo-Tr, MaS}. When spectral parameter contained 
in boundary condition, reconstruction of potential from spectral function is carried out in \cite{MaS}
and the uniqueness of inverse problem for Dirac operator according to Weyl function is  worked in \cite{Ami-Kes-Oz}. 
Inverse spectral problem for Dirac operator with potentials belong entrywise 
to $L_{p}(0,1)$; for some $p\in[1,\infty)$, was studied in \cite{Al-Hr-My} and in this work,
not only Gelfand-Levitan-Marchenko method but also Krein method \cite{Kr} was used.  Using
Weyl-Titschmarsh function, direct and inverse problems for Dirac type-system were investigated in \cite{Sa, Sa1}.
For weighted Dirac systems, inverse spectral problems was examined in \cite{Wa}.

On a positive half line, inverse scattering problem for a 
system of Dirac equations of order $2n$ is completely solved in
\cite{Gas} and when boundary condition involving spectral parameter, 
for Dirac operator, inverse scattering problem worked in \cite{Ma-Col1, Ma-Col2}. 
The applications of Dirac differential equations system has been widespread in various areas of physics, 
such as \cite{Sp1, Sp2, Th} ever since Dirac equation was discovered to be
associated with nonlinear wave equation in \cite{Ab1}.

This paper is organized as follows: In section 2, the asymptotic formulas of eigenvalues, 
eigenfunctions and normalizing numbers of the boundary value problem (\ref{1}), (\ref{2}) are investigated.
In section 3, by the eigenfunctions, completeness theorem 
is proved and expansion formula is obtained. Parseval equality
is given. In section 4, the main equation namely Gelfand-Levitan-Marchenko
type equation is derived. In section 5, we show that the boundary value problem 
(\ref{1}),(\ref{2}) can be uniquely determined from its eigenvalues and normalizing numbers.
Finally in section 6, the solution of inverse problem is obtained. Let's express
this more clearly. We can state the inverse problem for a system of
Dirac equations in the following way: let $\lambda_{n}$ and $\alpha_{n}$, $\left(n\in\mathbb{Z}\right)$ 
are respectively eigenvalues and normalizing numbers of boundary value problem (\ref{1}), (\ref{2}) and
the quantities $\left\{\lambda_{n},\alpha_{n}\right\}$, $\left(n\in\mathbb{Z}\right)$ are called spectral data. 
Knowing the spectral data $\left\{\lambda_{n},\alpha_{n}\right\}$, $\left(n\in\mathbb{Z}\right)$ to 
indicate a method of determining the potential $\Omega(x)$ and to find necessary and sufficient conditions for
$\lambda_{n}$ and $\alpha_{n}$, $\left(n\in\mathbb{Z}\right)$ to be the spectral data of a problem (\ref{1}),(\ref{2}),
for this, we derive differential equation, Parseval equality and boundary conditions. The
main theorem on the necessary and sufficient conditions for the solvability
of inverse problem is proved and then algorithm of the construction of the
function $\Omega(x)$ by spectral data is given. Note that throughout this paper, we 
use the following notation: $\tilde{\phi}$ denotes the transposed matrix of $\phi$.

\section{Asymptotic Formulas of Eigenvalues, Eigenfunction and Normalizing Numbers}

The inner product in Hilbert space $H=L_{2}(0,\pi;\mathbb{C}^{2})\oplus \mathbb{C}$ is defined by
\[
\left\langle Y,Z\right\rangle:=\int_{0}^{\pi}\left[y_{1}(x)\overline{z}_{1}(x)+
y_{2}(x)\overline{z}_{2}(x)\right]dx+\frac{1}{h_{2}}y_{3}\overline{z}_{3},
\] where 
\[
Y=\left( 
\begin{array}{c}
y_{1}(x) \\ 
y_{2}(x) \\ 
y_{3}
\end{array}
\right) \in H, \ \ \ \ \ \ \ \ Z=\left( 
\begin{array}{c}
z_{1}(x) \\ 
z_{2}(x) \\ 
z_{3}
\end{array}
\right) \in H. 
\]
Let us define 
\[
L(Y):=\left( 
\begin{array}{c}
l(y) \\ 
-h_{1}y_{1}(\pi)-h_{2}y_{2}(\pi)
\end{array}
\right) 
\]
with
\[
D(L)=\left\{ 
\begin{array}{c}
Y\left|\right.Y=\left( y_{1}(x),y_{2}(x),y_{3}\right)^{T} \in H,\
y_{1}(x),\ y_{2}(x)\in AC[0,\pi ], \\ 
y_{3}=y_{1}(\pi) ,\ l(y)\in L_{2}(0,\pi; \mathbb{C}^{2}),\
U(y)=0,\ V(y)=0
\end{array}
\right\} 
\]
where
\[
l(y)=\left(\begin{array}{c}
y_{2}^{'}+p(x)y_{1}+q(x)y_{2}\\
-y_{1}^{'}+q(x)y_{1}-p(x)y_{2}
\end{array}\right) . 
\]
Let $\varphi(x,\lambda)$ and $\psi(x,\lambda)$ be solutions of the system (\ref{1}) satisfying
the initial conditions
\begin{equation}\label{b}
\varphi(0,\lambda)=\left(\begin{array}{c}
0 \\
-1
\end{array}\right), \ \ \ \psi(\pi,\lambda)=\left(
\begin{array}{c}
h_{2} \\
-\lambda-h_{1}
\end{array}
\right). 
\end{equation} 
The solution $\varphi(x,\lambda)$ has the following representation (see \cite{Ami-Kes-Oz})
\begin{equation}\label{3}
\varphi(x,\lambda)=\varphi_{0}(x,\lambda)+\int_{0}^{x}K(x,t)\varphi_{0}(t,\lambda)dt,
\end{equation}
where $\varphi_{0}(x,\lambda)=\left(\begin{array}{c}
\sin\lambda x \\
-\cos\lambda x
\end{array}\right)$, $K_{ij}(x,.)\in L_{2}(0,\pi), \ i,j=1,2$ for fixed $x\in[0,\pi]$ and $K(x,t)$
is solution of the problem
\[
BK'_{x}(x,t)+K'_{t}(x,t)B=-\Omega(x)K(x,t),
\]
\begin{equation}\label{4}
\Omega(x)=K(x,x)B-BK(x,x),
\end{equation}
\[
K_{11}(x,0)=K_{21}(x,0)=0.
\] The formula (\ref{4}) gives the relation between the kernel $K(x,t)$
and the coefficient of $\Omega(x)$ of the equation (\ref{1}). 

The characteristic function $\Delta(\lambda)$ of the problem $L$ is 
\begin{equation}\label{5}
\Delta(\lambda):=W\left[\varphi(x,\lambda), \psi(x,\lambda)\right]=
\varphi_{2}(x,\lambda)\psi_{1}(x,\lambda)-\varphi_{1}(x,\lambda)\psi_{2}(x,\lambda),
\end{equation} 
where $W\left[\varphi(x,\lambda), \psi(x,\lambda)\right]$ is Wronskian of the solutions $\varphi(x,\lambda)$ and 
$\psi(x,\lambda)$ and independent of $x\in[0,\pi].$
The zeros of $\Delta(\lambda)$ coincide with the eigenvalues $\lambda_{n}$ of problem $L$. 
The functions $\varphi(x,\lambda)$ and $\psi(x,\lambda)$ are eigenfunctions and there exists a sequence $\beta_{n}$
such that
\begin{equation}\label{6}
\psi(x,\lambda_{n})=\beta_{n}\varphi(x,\lambda_{n}), \ \ \ \beta_{n}\neq 0.
\end{equation}
Normalizing numbers are
\[
\alpha_{n}:=\int_{0}^{\pi}\left(\left|\varphi_{1}(x,\lambda_{n})\right|^{2}+
\left|\varphi_{2}(x,\lambda_{n})\right|^{2}\right)dx+
\frac{1}{h_{2}}\left|\varphi_{1}(\pi,\lambda_{n})\right|^{2}.
\] 
The following relation holds (see \cite{Mam-Ak}):
\begin{equation}\label{7}
\dot{\Delta}(\lambda_{n})=\beta_{n}\alpha_{n},
\end{equation}
where $\dot{\Delta}(\lambda)=\frac{d}{d\lambda}\Delta(\lambda).$ 

\begin{theorem} \label{thm1}
i) The eigenvalues $\lambda_{n},$ $\left(n\in\mathbb{Z}\right)$ of boundary value problem (\ref{1}), (\ref{2}) are 
\begin{equation}\label{8}
\lambda_{n}=\lambda_{n}^{0}+\epsilon_{n}, \ \ \ \left\{\epsilon_{n}\right\}\in l_{2},
\end{equation}
where $\lambda_{n}^{0}=n$ are zeros of function $\lambda\sin\lambda\pi$.
For the large $n$, the eigenvalues are simple. 

ii) The eigenfunctions of the boundary value problem can be represented in the form
\begin{equation}\label{9}
\varphi(x,\lambda_{n})=\left(\begin{array}{c}
\sin nx \\
-\cos nx 
\end{array}
\right)+\left(\begin{array}{c}
\zeta_{n}^{(1)}(x) \\
\zeta_{n}^{(2)}(x) 
\end{array}
\right),
\end{equation}
where $\sum_{n=-\infty}^{\infty}\left\{\left|\zeta_{n}^{(1)}(x)\right|^{2}+\left|\zeta_{n}^{(2)}(x)\right|^{2}\right\}\leq C,$
in here $C$ is a positive number. 

iii)Normalizing numbers of the problem (\ref{1}), (\ref{2}) are as follows
\begin{equation}\label{10}
\alpha_{n}=\pi+\tau_{n}, \ \ \ \left\{\tau_{n}\right\}\in l_{2}.
\end{equation}
\end{theorem}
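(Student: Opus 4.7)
The proof rests on the integral representation (\ref{3}) together with Bessel's inequality applied to the Fourier coefficients of the kernel $K(x,\cdot)$. My first step is to obtain an explicit formula for the characteristic function $\Delta(\lambda)$. Taking $x=\pi$ in (\ref{5}) and using the values (\ref{b}) one gets $\Delta(\lambda) = (\lambda+h_1)\varphi_1(\pi,\lambda) + h_2\varphi_2(\pi,\lambda)$. Substituting (\ref{3}) and integrating by parts once in the $\lambda$-multiplied integrals (using $K_{11}(\pi,0)=K_{21}(\pi,0)=0$) reduces everything to Fourier transforms of $L_2$ densities on $[0,\pi]$, so on horizontal strips
$$\Delta(\lambda) = \lambda\sin\lambda\pi + h_1\sin\lambda\pi - h_2\cos\lambda\pi + R(\lambda),$$
where $R(\lambda)$ is a Fourier transform of an $L_2$ function. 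A Rouché argument on small disks around each integer $n$ and on contours $|\lambda|=n+\tfrac12$ gives $\lambda_n = n+\epsilon_n$ with $\epsilon_n\to 0$, simple for $|n|$ large. Substituting $\lambda=\lambda_n$ into $\Delta(\lambda_n)=0$ and solving for $\sin(\epsilon_n\pi)$ writes (up to bounded factors) the quantity $n\epsilon_n$ as the $n$-th Fourier coefficient of an $L_2$ function, so Bessel's inequality yields $\{\epsilon_n\}\in l_2$; this proves (i).

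For (ii) I insert $\lambda=\lambda_n$ into (\ref{3}) and split
$$\varphi(x,\lambda_n)-\varphi_0(x,n) = [\varphi_0(x,\lambda_n)-\varphi_0(x,n)] + \int_0^x K(x,t)\varphi_0(t,\lambda_n)\,dt.$$
The bracketed difference is $O(|\epsilon_n|)$ uniformly in $x$ by the mean value theorem and hence in $l_2$ by (i). In the integral I replace $\sin\lambda_n t,\cos\lambda_n t$ by $\sin nt,\cos nt$ at the cost of $O(|\epsilon_n|)$, so what remains are Fourier sine/cosine coefficients of the $L_2$ functions $K_{ij}(x,\cdot)\chi_{[0,x]}(\cdot)$. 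Bessel's inequality gives
$$\sum_{n\in\mathbb{Z}}\Big|\int_0^x K_{ij}(x,t)e^{\pm int}\,dt\Big|^2 \leq \int_0^\pi |K_{ij}(x,t)|^2\,dt,$$
and since $x\mapsto\|K_{ij}(x,\cdot)\|_{L_2(0,\pi)}$ is continuous on $[0,\pi]$, the right-hand side is uniformly bounded, yielding the required pointwise estimate with a single constant $C$.

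For (iii) I would compute $\alpha_n = \|\varphi(\cdot,\lambda_n)\|_H^2$ directly via part (ii). The principal term is $\int_0^\pi(\sin^2 nx+\cos^2 nx)\,dx = \pi$. The quadratic remainder $\int_0^\pi(|\zeta_n^{(1)}(x)|^2+|\zeta_n^{(2)}(x)|^2)\,dx$ is in $l_1\subset l_2$ upon integrating the pointwise bound of (ii) in $x$. The boundary term $h_2^{-1}|\varphi_1(\pi,\lambda_n)|^2$ is $O(n^{-2})$ via $\Delta(\lambda_n)=0$, which yields $\varphi_1(\pi,\lambda_n) = -h_2\varphi_2(\pi,\lambda_n)/(\lambda_n+h_1)$ with $\varphi_2(\pi,\lambda_n)$ bounded, and is therefore in $l_2$. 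The cross term
$$2\int_0^\pi\bigl[\sin nx\,\zeta_n^{(1)}(x) - \cos nx\,\zeta_n^{(2)}(x)\bigr]dx$$
lies in $l_2$ by re-expressing $\zeta_n^{(j)}$ through (\ref{3}), swapping the order of integration by Fubini, and applying Bessel's inequality to the resulting single Fourier coefficient of an $L_2$ function. Alternatively one can obtain the same conclusion from (\ref{7}): the expansion of $\dot{\Delta}(\lambda_n)$ and of $\beta_n=h_2/\varphi_1(\pi,\lambda_n)$ both contain a leading factor $(-1)^n n$, and the ratio cancels to $\pi + l_2$.

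The principal obstacle is preserving the $l_2$ character of the error sequences under the successive substitutions, and in particular securing the \emph{uniform}-in-$x$ constant $C$ in (ii); this rests on uniform boundedness of $\|K_{ij}(x,\cdot)\|_{L_2(0,\pi)}$ on $[0,\pi]$, which is standard for the Dirac transformation operator but must be invoked explicitly. A secondary point in (i) is verifying that $R(\lambda)$ grows strictly slower than $|\lambda\sin\lambda\pi|$ on the separating contours, which follows from the Riemann--Lebesgue estimate for Fourier transforms of $L_2$ functions.
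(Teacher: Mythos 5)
Your argument follows essentially the same route as the paper's: the representation (\ref{3}) yields $\Delta(\lambda)=\lambda\sin\lambda\pi+O(e^{|Im\lambda|\pi})$, Rouch\'e's theorem on the contours $|\lambda|=n+\tfrac12$ and on small disks about the integers localizes the eigenvalues, and Bessel's inequality applied to the Fourier coefficients of $K(x,\cdot)$ (the paper cites Marchenko, p.~66, for exactly this) gives (ii) and then (iii) by the same direct expansion of $\alpha_{n}$. One small correction in (i): because of the non-decaying term $-h_{2}\cos\lambda\pi$ in $\Delta(\lambda)$ (recall $h_{2}>0$), the quantity $n\epsilon_{n}$ tends to the nonzero constant $h_{2}/\pi$ and so is \emph{not} itself (up to bounded factors) an $l_{2}$ Fourier-coefficient sequence; the correct and sufficient statement, which is what the paper uses, is $\sin(\epsilon_{n}\pi)=O(1/n)$, whence $\epsilon_{n}=O(1/n)\in l_{2}$.
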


\begin{proof}
i) Using (\ref{3}), as $\left|\lambda\right|\rightarrow\infty$ uniformly in $x\in[0,\pi]$ the following asymptotic 
formulas are obtain:
\begin{equation}\label{11}
\begin{array}{c}
\varphi_{1}(x,\lambda)=\sin\lambda x+O\left(\frac{1}{\left|\lambda\right|}e^{\left|Im\lambda\right|x}\right), \\
\varphi_{2}(x,\lambda)=-\cos\lambda x+O\left(\frac{1}{\left|\lambda\right|}e^{\left|Im\lambda\right|x}\right).
\end{array}
\end{equation}
Substituting the asymptotic formulas (\ref{11}) into
\[
\Delta(\lambda)=\left(\lambda+h_{1}\right)\varphi_{1}(\pi,\lambda)+h_{2}\varphi_{2}(\pi,\lambda),
\] we get as $\left|\lambda\right|\rightarrow\infty$
\begin{equation}\label{12}
\Delta(\lambda)=\lambda\sin\lambda\pi+O\left(e^{\left|Im\lambda\right|\pi}\right).
\end{equation}
Denote 
\[
G_{\delta}:=\left\{\lambda:\ \left|\lambda-n\right|\geq\delta, \ \ \ n=0,\pm 1,\pm 2...\right\},
\] 
where $\delta$ is a sufficiently small positive number.
There exists a positive number $C_{\delta}$ such that (see \cite{Mar})
\begin{equation}\label{13}
\left|\sin\lambda\pi\right|\geq C_{\delta}e^{\left|Im\lambda\right|\pi}, \ \ \ \lambda\in G_{\delta}.
\end{equation}
On the other hand from (\ref{12}), as $\left|\lambda\right|\rightarrow\infty$
\begin{equation}\label{14}
\Delta(\lambda)-\lambda\sin\lambda\pi=O\left(e^{\left|Im\lambda\right|\pi}\right).
\end{equation} Therefore, on infinitely expanding contours 
\[
\Gamma_{n}=\left\{\lambda:\left|\lambda\right|=n+\frac{1}{2}\right\}
\] for sufficiently large $n$, using (\ref{13}) and (\ref{14}) we have
\[
\left|\Delta(\lambda)-\lambda\sin\lambda\pi\right|<\left|\lambda\right|\left|\sin\lambda\pi\right|, \ \ \lambda\in\Gamma_{n}.
\] Applying the Rouche theorem, it is obtained that the number of zeros of the
function $\left\{\Delta(\lambda)-\lambda\sin\lambda\pi\right\}+\lambda\sin\lambda\pi=\Delta(\lambda)$ 
inside the counter $\Gamma_{n}$ coincides
with the number of zeros of function $\lambda\sin\lambda\pi.$ Moreover, using the Rouche
theorem, there exist only one zero $\lambda_{n}$ of the function $\Delta(\lambda)$ in the circle 
$\gamma_{n}=\left\{\lambda:\left|\lambda-n\right|<\delta\right\}$ is concluded. Since $\delta>0$ is arbitrary, we have
\begin{equation}\label{15}
\lambda_{n}=n+\epsilon_{n}, \ \ \ \ \lim_{n=\pm\infty}\epsilon_{n}=0.
\end{equation} 
Substituting (\ref{15}) into (\ref{12}), we get $\sin\epsilon_{n}\pi=O\left(\frac{1}{n}\right),$ i.e.
$\epsilon_{n}=O\left(\frac{1}{n}\right).$ Thus $\left\{\epsilon_{n}\right\}\in l_{2}$ is found. 
For the large n, the eigenvalues are simple. In fact, $\alpha_{n}\beta_{n}=\dot{\Delta}(\lambda_{n})$ and 
$\alpha_{n}\neq 0$, $\beta_{n}\neq 0$, we get $\dot{\Delta}(\lambda_{n})\neq 0.$ 
\\

ii) Putting (\ref{8}) into (\ref{3}), we have
\[
\varphi(x,\lambda_{n})=\left(\begin{array}{c}
\sin nx \\
-\cos nx 
\end{array}
\right)+\left(\begin{array}{c}
\zeta_{n}^{(1)}(x) \\
\zeta_{n}^{(2)}(x) 
\end{array}
\right),
\] where 
\begin{eqnarray}\nonumber
\zeta_{n}^{(1)}(x)&=&\sin nx\left[\cos\epsilon_{n}x-1\right]+\cos nx\sin\epsilon_{n}x+ \\ \nonumber
&&+\int_{0}^{x}K_{11}(x,t)\sin(n+\epsilon_{n})tdt-\int_{0}^{x}K_{12}(x,t)\cos(n+\epsilon_{n})tdt, \nonumber
\end{eqnarray}
\begin{eqnarray}\nonumber
\zeta_{n}^{(2)}(x)&=&-\cos nx\left[\cos\epsilon_{n}x-1\right]+\sin nx\sin\epsilon_{n}x+ \\ \nonumber
&&+\int_{0}^{x}K_{21}(x,t)\sin(n+\epsilon_{n})tdt-\int_{0}^{x}K_{22}(x,t)\cos(n+\epsilon_{n})tdt.
\end{eqnarray} 
Since $K_{ij}(\pi,.)\in L_{2}(0,\pi)$, $i,j=1,2$, according to (see \cite{Mar}, pp.66) we get
\[
\left\{\int_{0}^{\pi}K_{i1}(\pi,t)\sin\lambda_{n}tdt\right\}\in l_{2}, \ \ \ \ \
\left\{\int_{0}^{\pi}K_{i2}(\pi,t)\cos\lambda_{n}tdt\right\}\in l_{2}.
\] Using these relations and $\left\{\epsilon_{n}\right\}\in l_{2}$, we obtain
\[\sup_{0\leq x\leq\pi}\sum_{n=-\infty}^{\infty}\left\{\left|\zeta_{n}^{(1)}(x)\right|^{2}+
\left|\zeta_{n}^{(2)}(x)\right|^{2}\right\}<+\infty.\] 
\\

iii) Using the definition of normalizing numbers and (\ref{9})
\begin{eqnarray}\nonumber
\alpha_{n}&=&\int_{0}^{\pi}\left(\left|\varphi_{1}(x,\lambda_{n})\right|^{2}+
\left|\varphi_{2}(x,\lambda_{n})\right|^{2}\right)dx+
\frac{1}{h_{2}}\left|\varphi_{1}(\pi,\lambda_{n})\right|^{2} \\ \nonumber
&=&\int_{0}^{\pi}dx+\tau_{n}=\pi+\tau_{n},
\end{eqnarray} where
\begin{eqnarray}\nonumber
\tau_{n}&=&\int_{0}^{\pi}\left(\zeta_{n}^{(1)}(x)\right)^{2}dx+\int_{0}^{\pi}\left(\zeta_{n}^{(2)}(x)\right)^{2}dx+
2\int_{0}^{\pi}\sin nx\zeta_{n}^{(1)}(x)dx- \\ \nonumber
&&-2\int_{0}^{\pi}\cos nx\zeta_{n}^{(2)}(x)dx+\frac{1}{h_{2}}\left(\zeta_{n}^{(1)}(\pi)\right)^{2}. \nonumber
\end{eqnarray} 
Since $\left\{\zeta_{n}^{(1)}(x)\right\}\in l_{2}$ and $\left\{\zeta_{n}^{(2)}(x)\right\}\in l_{2}$, we find 
$\left\{\tau_{n}\right\}\in l_{2}.$ The theorem is proved.
\end{proof}

\begin{proposition} \label{p1}
The specification of the eigenvalues $\lambda_{n}, \ (n\in\mathbb{Z})$ uniquely determines the
characteristic function $\Delta(\lambda)$ by formula 
\begin{equation}\label{a}
\Delta(\lambda)=-\pi(\lambda_{0}^{2}-\lambda^{2})\prod_{n=1}^{\infty}\frac{(\lambda_{n}^{2}-\lambda^{2})}{n^{2}}.
\end{equation}
\end{proposition}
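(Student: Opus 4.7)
The plan is to recognize that $\Delta(\lambda)$ is an entire function of exponential type exactly $\pi$ whose zero set is precisely $\{\lambda_{n}\}_{n\in\mathbb{Z}}$, build an explicit entire function $\tilde\Delta$ out of the zeros with the same asymptotic behavior, and then prove $\Delta\equiv\tilde\Delta$ by a Liouville-type argument on the ratio.

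First I would argue that $\Delta(\lambda)$ is entire of order $1$ and exponential type $\pi$. This is immediate from (\ref{11}) and (\ref{12}): the representation (\ref{3}), together with $K_{ij}(\pi,\cdot)\in L_{2}(0,\pi)$, makes $\varphi_{1}(\pi,\lambda)$ and $\varphi_{2}(\pi,\lambda)$ entire of type $\pi$, and $\Delta(\lambda)=(\lambda+h_{1})\varphi_{1}(\pi,\lambda)+h_{2}\varphi_{2}(\pi,\lambda)$ inherits the same type. By Theorem~\ref{thm1}(i) the zeros of $\Delta$ are $\{\lambda_{n}\}_{n\in\mathbb{Z}}$, eventually simple, with $\lambda_{n}=n+\epsilon_{n}$ and $\{\epsilon_{n}\}\in\ell_{2}$.

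Next, denote by $\tilde\Delta(\lambda)$ the right-hand side of (\ref{a}). Using the $\ell_{2}$ decay of $\epsilon_{n}$ I would check that the infinite product converges uniformly on compacta, so that $\tilde\Delta$ is entire. To control its growth I would divide by the classical Weierstrass factorization
\[
\lambda\sin\lambda\pi=\pi\lambda^{2}\prod_{n=1}^{\infty}\frac{n^{2}-\lambda^{2}}{n^{2}},
\]
and examine
\[
\frac{\tilde\Delta(\lambda)}{\lambda\sin\lambda\pi}=\frac{-(\lambda_{0}^{2}-\lambda^{2})}{\lambda^{2}}\prod_{n=1}^{\infty}\frac{\lambda_{n}^{2}-\lambda^{2}}{n^{2}-\lambda^{2}}.
\]
Since $\lambda_{n}^{2}-n^{2}=2n\epsilon_{n}+\epsilon_{n}^{2}$, the logarithm of each factor is $O\bigl(|n|\,|\epsilon_{n}|/|n^{2}-\lambda^{2}|\bigr)$; summing with Cauchy--Schwarz and combining with the lower bound (\ref{13}) would yield
\[
\tilde\Delta(\lambda)=\lambda\sin\lambda\pi\,(1+o(1))\qquad\text{as }|\lambda|\to\infty,\ \lambda\in G_{\delta}.
\]
In particular $\tilde\Delta$ obeys the same asymptotic (\ref{12}) as $\Delta$.

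Finally I would form $F(\lambda):=\Delta(\lambda)/\tilde\Delta(\lambda)$. Since $\Delta$ and $\tilde\Delta$ share the same zeros with matching (eventually simple) multiplicities, $F$ extends to an entire nowhere-vanishing function. On each contour $\Gamma_{N}=\{|\lambda|=N+\tfrac12\}$ the two asymptotics together with (\ref{13}) give $|F(\lambda)|\le M$, with $M$ independent of $N$, so the maximum modulus principle on the disks $\{|\lambda|\le N+\tfrac12\}$ propagates the bound inside; hence $F$ is bounded on $\mathbb{C}$. Liouville then forces $F$ to be a constant, and letting $|\lambda|\to\infty$ inside $G_{\delta}$ identifies that constant as $1$. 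This gives (\ref{a}). The main obstacle is the asymptotic step for $\tilde\Delta$ on $G_{\delta}$: one has to convert $\ell_{2}$ control of $\{\epsilon_{n}\}$ into a uniform bound for $\sum_{n\ge 1}\log\bigl(1+(\lambda_{n}^{2}-n^{2})/(n^{2}-\lambda^{2})\bigr)$, which means separating a finite initial segment $|n|\le|\lambda|+C$ (handled directly) from the tail (handled by Cauchy--Schwarz), with (\ref{13}) providing the denominator estimate that makes the split useful. Once that estimate is in place, the remaining maximum-modulus argument is routine.
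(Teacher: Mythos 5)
Your proposal is correct, and it arrives at \eqref{a} by a route that is recognizably different in execution from the paper's, even though both rest on the same principle (an entire function of order one is determined by its zeros up to an exponential factor, which the growth estimate \eqref{12} then pins down). The paper's entire proof is a one-line appeal to Hadamard's factorization theorem together with \eqref{12}; it leaves implicit the work of showing that the genus-one canonical product over the zeros can be regrouped into the symmetric form $\prod(\lambda_n^2-\lambda^2)/n^2$ and that the residual factor $e^{a\lambda+b}$ is a constant equal to $-\pi$. You instead construct the candidate product $\tilde\Delta$ directly, compare it to the explicit factorization of $\lambda\sin\lambda\pi$, and run a Liouville argument on $\Delta/\tilde\Delta$, identifying the constant by letting $|\lambda|\to\infty$ in $G_\delta$. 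This is the self-contained version found in Marchenko and Freiling--Yurko, and it correctly isolates the one nontrivial estimate (converting $\{\epsilon_n\}\in\ell_2$, or the sharper $\epsilon_n=O(1/n)$ available from Theorem~\ref{thm1}, into the bound $\sum_{n}|\lambda_n^2-n^2|\,|n^2-\lambda^2|^{-1}\to 0$ on $G_\delta$ via the near/far splitting) that the citation of Hadamard hides. What your approach buys is transparency about exactly which asymptotic inputs are used and where; what the paper's buys is brevity. One caveat applies equally to both arguments: the right-hand side of \eqref{a} vanishes at $\pm\lambda_n$ for $n\ge 0$, so matching its zero set with $\{\lambda_n\}_{n\in\mathbb{Z}}$ tacitly uses $\lambda_{-n}=-\lambda_n$ (and a suitable accounting of the two zeros near the origin); this is an issue with the statement's normalization rather than with your proof, but in your ratio $F=\Delta/\tilde\Delta$ it is precisely the point where "same zeros with matching multiplicities" must be checked.
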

\begin{proof}
Since the function $\Delta(\lambda)$ is entire function of order $1$, from Hadamard's theorem (see \cite{Le}),
using (\ref{12}) we obtain (\ref{a}).
\end{proof}

\section{Parseval Equality}

\begin{theorem} \label{thm2}
a) The system of eigenfunctions $\varphi(x,\lambda_{n})$, $(n\in \mathbb{Z})$ of boundary value problem (\ref{1}), (\ref{2}) 
is complete in space $L_{2}(0,\pi;\mathbb{C}^{2})$. 

b) Let $f(x)\in D(L).$  Then the expansion formula
\begin{equation}\label{16}
f(x)=\sum_{n=-\infty}^{\infty}a_{n}\varphi(x,\lambda_{n}),
\end{equation}
\[
a_{n}=\frac{1}{\alpha_{n}}\left\langle f(x), \varphi(x,\lambda_{n})\right\rangle
\] is valid and the series converges uniformly with respect to $x\in[0,\pi].$ 
For $f(x)\in L_{2}(0,\pi;\mathbb{C}^{2})$ series (\ref{16}) converges in $L_{2}(0,\pi;\mathbb{C}^{2})$, moreover,
Parseval equality holds
\begin{equation}\label{17}
\left\|f\right\|^{2}=\sum_{n=-\infty}^{\infty}\alpha_{n}\left|a_{n}\right|^{2}.
\end{equation}
\end{theorem}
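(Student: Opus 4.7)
The plan is to promote the problem to the augmented Hilbert space $H$ of Section~2, in which the spectral parameter in the boundary condition becomes an ordinary eigenvalue, and then to prove completeness by contour integration of the resolvent. For each eigenvalue $\lambda_n$ set
\[
\Phi_n(x):=\bigl(\varphi_1(x,\lambda_n),\,\varphi_2(x,\lambda_n),\,\varphi_1(\pi,\lambda_n)\bigr)^T\in H.
\]
A direct check shows $L\Phi_n=\lambda_n\Phi_n$ and $\|\Phi_n\|_H^{2}=\alpha_n$. Identifying $f\in L_2(0,\pi;\mathbb{C}^2)$ with $F=(f_1,f_2,0)^T\in H$, one has $\langle F,\Phi_n\rangle_H=\langle f,\varphi(\cdot,\lambda_n)\rangle_{L_2}$, so (\ref{16})--(\ref{17}) will follow from the fact that $\{\Phi_n\}$ is a complete orthogonal system in $H$.

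First I would build the Green's function $G(x,t,\lambda)$ for $R_\lambda=(L-\lambda I)^{-1}$ from the pair $\varphi,\psi$, dividing by $\Delta(\lambda)$ as in (\ref{5}). For $F\in H$ the vector $Y(x,\lambda):=(R_\lambda F)(x)$ is meromorphic in $\lambda$ with simple poles exactly at the $\lambda_n$, and the identities (\ref{6})--(\ref{7}) give
\[
\operatorname{Res}_{\lambda=\lambda_n}R_\lambda F=-\frac{\langle F,\Phi_n\rangle_H}{\alpha_n}\,\Phi_n.
\]
To prove part~(a), suppose $F\perp\Phi_n$ for every $n$. Then every residue of $Y(\cdot,\lambda)$ vanishes, so $Y$ extends to an entire $H$-valued function of $\lambda$. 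Combining the asymptotics (\ref{11}) for $\varphi$, the analogous ones for $\psi$, the bound (\ref{12}) on $\Delta(\lambda)$, and the minoration (\ref{13}) of $|\sin\lambda\pi|$ on the contours $\Gamma_n$, one shows $\|Y(\cdot,\lambda)\|_H\to 0$ as $|\lambda|\to\infty$ along $\Gamma_n$. Liouville's theorem then forces $Y\equiv 0$, whence $F=0$.

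For part~(b) I would evaluate
\[
I_n(x):=-\frac{1}{2\pi i}\oint_{\Gamma_n}R_\lambda F\,d\lambda
\]
in two ways. The residue theorem gives $I_n=\sum_{|k|\le n}\alpha_k^{-1}\langle F,\Phi_k\rangle_H\Phi_k$. For $F\in D(L)$ the identity $R_\lambda F=-\lambda^{-1}F+\lambda^{-1}R_\lambda(LF)$ supplies an extra factor $|\lambda|^{-1}$ on $\Gamma_n$ and, combined with the contour estimates already available, shows $I_n(x)\to F(x)$ uniformly in $x\in[0,\pi]$; this yields (\ref{16}). A density argument, approximating $f\in L_2$ by elements of $D(L)$, upgrades this to $L_2$-convergence for general $f$, and Parseval (\ref{17}) then follows from pairwise orthogonality of the $\Phi_n$ together with $\|\Phi_n\|_H^{2}=\alpha_n$.

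The main obstacle will be the contour estimate $\|R_\lambda F\|_H\to 0$ on $\Gamma_n$: because $\Delta(\lambda)\sim\lambda\sin\lambda\pi$ rather than $\sin\lambda\pi$, one must carefully weigh the additional factor $\lambda$ coming from the spectral parameter in (\ref{2}) against the pointwise contribution of the third component $y_3=y_1(\pi)$ to the $H$-norm. Once this bookkeeping is in place, the classical Gel'fand--Levitan contour argument carries through and yields both the completeness statement and the uniform expansion.
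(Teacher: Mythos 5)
Your proposal is correct and its analytic core coincides with the paper's proof: both build the Green's function from $\varphi,\psi$ and $\Delta(\lambda)$, compute $\operatorname{Res}_{\lambda=\lambda_n}Y=-\alpha_n^{-1}\varphi(x,\lambda_n)\int_0^\pi\tilde\varphi(t,\lambda_n)f(t)\,dt$ via (\ref{6})--(\ref{7}), obtain completeness from Liouville's theorem using the bounds (\ref{13}), (\ref{22})--(\ref{24}), and derive the expansion by integrating over $\Gamma_N$ after splitting off $-\lambda^{-1}f$ (your identity $R_\lambda F=-\lambda^{-1}F+\lambda^{-1}R_\lambda(LF)$ is exactly the operator form of the paper's integration by parts leading to (\ref{25})--(\ref{26})).

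The one genuine difference is organizational, and it works in your favour. The paper carries out the contour argument for the two-component function $Y(x,\lambda)$ in $L_2(0,\pi;\mathbb{C}^2)$ and then asserts that $\{\varphi(x,\lambda_n)\}$ is \emph{orthogonal in} $L_2(0,\pi;\mathbb{C}^2)$ to conclude Parseval; that assertion is not right as stated -- the paper itself later computes $\int_0^\pi\tilde\varphi(t,\lambda_k)\varphi(t,\lambda_n)\,dt=-H/(\beta_k\beta_n)\neq 0$ for $k\neq n$ in the proof of Theorem \ref{thm7}. Orthogonality holds only in the augmented space $H$, for the vectors $\Phi_n=(\varphi_1,\varphi_2,\varphi_1(\pi,\lambda_n))^T$ with $\|\Phi_n\|_H^2=\alpha_n$. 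By working in $H$ throughout and embedding $f\in L_2$ as $(f_1,f_2,0)^T$ (so that $\langle F,\Phi_n\rangle_H$ reduces to the $L_2$ pairing and $\|F\|_H=\|f\|_{L_2}$), you obtain (\ref{17}) from Bessel/Parseval for the genuinely orthogonal system $\{\Phi_n\}$, which is the correct mechanism. Your closing remark about weighing the extra factor of $\lambda$ in $\Delta(\lambda)\sim\lambda\sin\lambda\pi$ against the third component of the $H$-norm identifies precisely the bookkeeping the paper performs in (\ref{22})--(\ref{24}); nothing further is missing.
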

\begin{proof}
a) Denote
\[
G(x,t,\lambda):=-\frac{1}{\Delta(\lambda)}\left\{\begin{array}{c}
\psi(x,\lambda)\tilde{\varphi}(t,\lambda), \ \ \ x\geq t \\
\varphi(x,\lambda)\tilde{\psi}(t,\lambda), \ \ \ t\geq x. 
\end{array}\right.
\] Consider the function
\begin{equation}\label{18}
Y(x,\lambda)=\int_{0}^{\pi}G(x,t,\lambda)f(t)dt 
\end{equation} 
which gives solution of boundary value problem
\begin{equation}\label{19}
\begin{array}{c}
BY'+\Omega(x)Y=\lambda Y+f(x), \\
Y_{1}(0,\lambda)=0, \\
\left(\lambda+h_{1}\right)Y_{1}(\pi,\lambda)+h_{2}Y_{2}(\pi,\lambda)=0.
\end{array}
\end{equation}
Using (\ref{6}) and (\ref{7}), we have
\[
\psi(x,\lambda_{n})=\frac{\dot{\Delta}(\lambda_{n})}{\alpha_{n}}\varphi(x,\lambda_{n}). 
\] Using this expression, we find
\begin{equation}\label{20}
\underset{\lambda =\lambda _{n}}{Res}Y(x,\lambda)=-\frac{1}{\alpha_{n}} \varphi(x,\lambda
_{n})\int_{0}^{\pi}\widetilde{\varphi}(t,\lambda _{n})f(t)dt.
\end{equation}
Let $f(x)\in L_{2}(0,\pi;\mathbb{C}^{2})$ be such that
\begin{equation}\label{21}
\left\langle f(x),\varphi(x,\lambda _{n})\right\rangle =\int_{0}^{\pi}
\tilde{\varphi}(t,\lambda_{n})f(t)dt=0. 
\end{equation}
Then, it follows from (\ref{20}) that $\underset{\lambda =\lambda _{n}}{Res}Y(x,\lambda )=0$.
Consequently $Y(x,\lambda )$ is entire function with
respect to $\lambda $ for each fixed $x\in \left[0,\pi \right] .$ Taking
into account 
\begin{equation}\label{22}
\left|\Delta(\lambda)\right|\geq C_{\delta}\left|\lambda\right|e^{\left|Im\lambda\right|\pi}
\end{equation}
and the following equalities being valid according to (\cite{Mar}, Lemma 1.3.1)
\begin{equation}\label{23}
\lim_{\left| \lambda \right| \rightarrow \infty }\max_{0\leq x\leq \pi }\exp
(-\left| Im\lambda\right|x)\left| \int_{0}^{x} \tilde{\varphi}
(t,\lambda)f(t)dt\right|=0,  
\end{equation}
\begin{equation}\label{24}
\lim_{\left|\lambda\right|\rightarrow\infty}\max_{0\leq x \leq\pi}\frac{1}{
\left|\lambda\right|}\exp(-\left|Im\lambda\right|\left(\pi-x\right)) 
\left|\int_{x}^{\pi}\tilde{\psi}(t,\lambda)f(t)dt\right|=0,  
\end{equation}
we find
\[
\lim_{\left|\lambda\right| \rightarrow \infty }\max_{0\leq x\leq \pi
}\left|Y(x,\lambda)\right| =0. 
\]
Hence $Y(x,\lambda)\equiv 0$ is obtained. From here and (\ref{19}), $f(x)=0$ a.e. on $(0,\pi)$. 
\\ 

b) Since $\varphi(x,\lambda )$ and $\psi (x,\lambda )$ are solution of the problem (\ref{1}), (\ref{2}),
\begin{eqnarray} \nonumber
Y(x,\lambda ) &=&-\frac{1}{\lambda \Delta (\lambda)}\psi (x,\lambda
)\int_{0}^{x}\left\{-\frac{\partial}{\partial t}\tilde{\varphi}(t,\lambda
)B+ \tilde{\varphi}(t,\lambda)\Omega(t)\right\} f(t)dt \\ \nonumber
&&-\frac{1}{\lambda \Delta(\lambda )}\varphi(x,\lambda)\int_{x}^{\pi}\left\{ - 
\frac{\partial }{\partial t}\widetilde{\psi}(t,\lambda )B+\tilde{\psi }
(t,\lambda )\Omega(t)\right\} f(t)dt \nonumber
\end{eqnarray}
can be written. Integrating by parts and using (\ref{5}),
\begin{equation} \label{25}
Y(x,\lambda)=-\frac{1}{\lambda}f(x)-\frac{1}{\lambda}Z(x,\lambda)  
\end{equation}
is obtained, where 
\begin{eqnarray} \nonumber
Z(x,\lambda)&=&\frac{1}{\Delta(\lambda)}\left\{\psi(x,\lambda)\int_{0}^{x} 
\tilde{\varphi}(t,\lambda)Bf^{\prime }(t)dt+ \varphi(x,\lambda)\int_{x}^{\pi} 
\tilde{\psi}(t,\lambda)Bf^{\prime}(t)dt\right. \\ \nonumber
&&+\left.\psi(x,\lambda)\int_{0}^{x}\tilde{\varphi}(t,\lambda)\Omega(t)f(t)dt+\varphi(x,\lambda)\int_{x}^{\pi} 
\tilde{\psi}(t,\lambda)\Omega(t)f(t)dt\right\}.  \nonumber
\end{eqnarray}
By applying (\ref{23}) and (\ref{24}), we have
\begin{equation}\label{26}
\lim_{\left|\lambda\right|\rightarrow\infty}\max_{0\leq x
\leq\pi}\left|Z(x,\lambda)\right|=0, \ \ \ \ \lambda\in G_{\delta}. 
\end{equation}
Now, we integrate of $Y(x,\lambda)$ with respect to $\lambda$ over the
contour $\Gamma_{N}$ with oriented counter clockwise as follows: 
\[
I_{n}(x)=\frac{1}{2\pi i}\oint_{\Gamma_{N}}Y(x,\lambda)d\lambda, 
\]
where 
$\Gamma_{N}=\left\{\lambda :\left| \lambda\right|=N+\frac{1}{2}\right\} $, 
$N$ is sufficiently large natural number. Using residue theorem, we get
\[
I_{n}(x)=\sum_{n=-N}^{N}
\underset{\lambda =\lambda _{n}}{Res}
Y(x,\lambda )
=-\sum_{n=-N}^{N}\frac{1}{\alpha _{n}}\varphi(x,\lambda_{n})\int_{0}^{\pi} 
\tilde{\varphi}(t,\lambda_{n})f(t)dt. 
\]
On the other hand, considering the equation (\ref{25})
\begin{equation}\label{27}
f(x)=\sum_{n=-N}^{N}a_{n}\varphi(x,\lambda _{n})+\epsilon_{N}(x) 
\end{equation}
is found, where 
\[
\epsilon _{N}(x)=-\frac{1}{2\pi i}\oint_{\Gamma _{N}}\frac{1}{\lambda}
Z(x,\lambda )d\lambda 
\]
and 
\[
a_{n}=\frac{1}{\alpha_{n}}\int_{0}^{\pi}\widetilde{\varphi}(t,\lambda_{n})f(t)dt. 
\]
It follows from (\ref{26}) that,
\[
\lim_{N \rightarrow \infty }\max_{0\leq x\leq \pi
}\left\vert \epsilon_{N}(x)\right\vert =0. 
\] Thus, by going over in (\ref{27}) to the limit as $N\rightarrow\infty$ the expansion formula (\ref{16}) is obtained. 
The system $\left\{\varphi(x,\lambda_{n})\right\}$, $(n\in \mathbb{Z})$ 
is complete and orthogonal in $L_{2}(0,\pi;\mathbb{C}^{2})$.
Therefore, it forms orthogonal basis in $L_{2}(0,\pi;\mathbb{C}^{2})$ and Parseval equality (\ref{17}) holds.  
\end{proof}

\section{Main Equation}
\begin{theorem} \label{thm3}
For each fixed $x\in(0,\pi]$ the kernel $K(x,t)$ satisfies the following equation:
\begin{equation}\label{28}
F(x,t)+K(x,t)+\int_{0}^{x}K(x,\xi)F(\xi,t)d\xi=0, \ \ \ \ 0<t<x,
\end{equation}
where
\begin{equation}\label{29}
F(x,t)=\sum_{n=-\infty}^{\infty}\left[\frac{1}{\alpha_{n}}\varphi_{0}(x,\lambda_{n})\tilde{\varphi}_{0}(t,\lambda_{n})-
\frac{1}{\pi}\varphi_{0}(x,\lambda_{n}^{0})\tilde{\varphi}_{0}(t,\lambda_{n}^{0})\right].
\end{equation} 
\end{theorem}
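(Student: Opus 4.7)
The plan is to apply the standard Gelfand-Levitan-Marchenko strategy: use the integral representation (\ref{3}) to reduce (\ref{28}) to two spectral identities, each of which is then evaluated via a Parseval-type equality (the perturbed one from Theorem~\ref{thm2} and the classical free one for the Fourier basis $\{\varphi_{0}(\cdot,n)\}_{n\in\mathbb{Z}}$ in $L_{2}(0,\pi;\mathbb{C}^{2})$). The key algebraic observation is that substituting (\ref{29}) into the left-hand side of (\ref{28}) and using $\int_{0}^{x}K(x,\xi)\varphi_{0}(\xi,\mu)\,d\xi=\varphi(x,\mu)-\varphi_{0}(x,\mu)$ (which is just (\ref{3})) for $\mu=\lambda_{n}$ and $\mu=n$ yields
\begin{equation*}
F(x,t)+K(x,t)+\int_{0}^{x}K(x,\xi)F(\xi,t)\,d\xi \;=\; K(x,t)+\Sigma_{1}(x,t)-\Sigma_{2}(x,t),
\end{equation*}
where $\Sigma_{1}(x,t):=\sum_{n}\frac{1}{\alpha_{n}}\varphi(x,\lambda_{n})\tilde{\varphi}_{0}(t,\lambda_{n})$ and $\Sigma_{2}(x,t):=\sum_{n}\frac{1}{\pi}\varphi(x,n)\tilde{\varphi}_{0}(t,n)$. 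It therefore suffices to establish $\Sigma_{2}=K(x,t)$ and $\Sigma_{1}=0$ for $0<t<x$.

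For $\Sigma_{2}$ I would expand $\varphi(x,n)$ by (\ref{3}) and invoke the reproducing kernel of the complete orthogonal system $\{\varphi_{0}(\cdot,n)\}_{n\in\mathbb{Z}}$ (with squared norm $\pi$): the identity $\sum_{n}\frac{1}{\pi}\varphi_{0}(\xi,n)\tilde{\varphi}_{0}(t,n)=\delta(\xi-t)I$ sifts the Volterra integral $\int_{0}^{x}K(x,\xi)(\cdot)\,d\xi$ to $K(x,t)$ for $t\in(0,x)$, while the unintegrated piece $\sum_{n}\frac{1}{\pi}\varphi_{0}(x,n)\tilde{\varphi}_{0}(t,n)=\delta(x-t)I$ vanishes there. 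For $\Sigma_{1}$ I would invert the Volterra transform: since $(I+\mathcal{K})f(y):=f(y)+\int_{0}^{y}K(y,\xi)f(\xi)\,d\xi$ is a Volterra perturbation of the identity, it admits a Volterra inverse $I+\mathcal{L}$ with triangular kernel $L(y,s)$, whence $\varphi_{0}(y,\lambda)=\varphi(y,\lambda)+\int_{0}^{y}L(y,s)\varphi(s,\lambda)\,ds$. Transposing this identity and substituting into $\Sigma_{1}$ rewrites the sum in terms of the perturbed reproducing kernel $\sum_{n}\frac{1}{\alpha_{n}}\varphi(x,\lambda_{n})\tilde{\varphi}(s,\lambda_{n})=\delta(x-s)I$ (the distributional form of the Parseval identity of Theorem~\ref{thm2}), evaluated at $s=t$ and integrated against $\tilde{L}(t,s)$ on $(0,t)$; for $0<t<x$ the $\delta$-mass sits at $s=x$, outside both the point $s=t$ and the interval $(0,t)$, so $\Sigma_{1}\equiv 0$. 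Combining gives $K(x,t)+0-K(x,t)=0$, which is (\ref{28}).

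The main obstacle is the rigorous interpretation of the $\delta$-function manipulations that appear at each step. A proper proof pairs every formal identity with a smooth vector-valued test function supported strictly inside $(0,x)$ and invokes Parseval's equality in the perturbed system $\{\varphi(\cdot,\lambda_{n})\}$ (Theorem~\ref{thm2}) together with its classical counterpart in the free basis $\{\varphi_{0}(\cdot,n)\}_{n\in\mathbb{Z}}$; the $l_{2}$ asymptotics $\{\lambda_{n}-n\}\in l_{2}$ and $\{\alpha_{n}-\pi\}\in l_{2}$ from Theorem~\ref{thm1} ensure the absolute convergence of the series defining $F$ and justify the termwise interchanges of summation with integration that occur throughout the argument.
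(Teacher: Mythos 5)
Your proposal is correct and follows essentially the same route as the paper: both arguments rest on the transformation operator (\ref{3}) together with its Volterra inverse (your kernel $L$ is the paper's $H$ in (\ref{31})), and both evaluate the resulting sums via the completeness/Parseval identities for the perturbed system $\{\varphi(\cdot,\lambda_{n})\}$ and the free system $\{\varphi_{0}(\cdot,n)\}$, made rigorous by pairing with test functions exactly as you indicate in your final paragraph. The paper's terms $I_{N1},\dots,I_{N4}$ are just a regrouping of your $K(x,t)+\Sigma_{1}-\Sigma_{2}$ decomposition (with the free reproducing kernel split between $\Phi_{N}\to 0$ and $I_{N3}\to K$), so the two proofs coincide in substance.
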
 

\begin{proof}
Using the transformation operators (see \cite{Lev}), according to (\ref{3}) we have
\begin{equation}\label{31}
\varphi_{0}(x,\lambda)=\varphi(x,\lambda)+\int_{0}^{x}H(x,t)\varphi(t,\lambda)dt.
\end{equation}
It follows from (\ref{3}) and (\ref{31}) that
\begin{eqnarray}\nonumber
\lefteqn{\sum_{n=-N}^{N}\frac{1}{\alpha_{n}}\varphi(x,\lambda_{n})\tilde{\varphi}_{0}(t,\lambda_{n})=} \\ \nonumber
&=&\sum_{n=-N}^{N}\frac{1}{\alpha_{n}}\varphi_{0}(x,\lambda_{n})\tilde{\varphi}_{0}(t,\lambda_{n})+
\int_{0}^{x}K(x,\xi)\sum_{n=-N}^{N}\frac{1}{\alpha_{n}}\varphi_{0}(\xi,\lambda_{n})\tilde{\varphi}_{0}(t,\lambda_{n})d\xi
\end{eqnarray}
and
\begin{eqnarray}\nonumber
\lefteqn{\sum_{n=-N}^{N}\frac{1}{\alpha_{n}}\varphi(x,\lambda_{n})\tilde{\varphi}_{0}(t,\lambda_{n})=} \\ \nonumber
&=&\sum_{n=-N}^{N}\frac{1}{\alpha_{n}}\varphi(x,\lambda_{n})\tilde{\varphi}(t,\lambda_{n})+
\sum_{n=-N}^{N}\frac{1}{\alpha_{n}}\varphi(x,\lambda_{n})\int_{0}^{t}\tilde{\varphi}(\xi,\lambda_{n})\tilde{H}(t,\xi)d\xi.
\end{eqnarray}
Using the last two equalities, we obtain
\begin{eqnarray}\nonumber
\lefteqn{\sum_{n=-N}^{N}\left[\frac{1}{\alpha_{n}}\varphi(x,\lambda_{n})\tilde{\varphi}(t,\lambda_{n})-
\frac{1}{\pi}\varphi_{0}(x,\lambda_{n}^{0})\tilde{\varphi}_{0}(t,\lambda_{n}^{0})\right]=}
\\ \nonumber
&=&\sum_{n=-N}^{N}\left[\frac{1}{\alpha_{n}}\varphi_{0}(x,\lambda_{n})\tilde{\varphi}_{0}(t,\lambda_{n})-
\frac{1}{\pi}\varphi_{0}(x,\lambda_{n}^{0})\tilde{\varphi}_{0}(t,\lambda_{n}^{0})\right]+
\\ \nonumber
&&+\int_{0}^{x}K(x,\xi)\sum_{n=-N}^{N}\left[\frac{1}{\alpha_{n}}\varphi_{0}(\xi,\lambda_{n})\tilde{\varphi}_{0}(t,\lambda_{n})-
\frac{1}{\pi}\varphi_{0}(\xi,\lambda_{n}^{0})\tilde{\varphi}_{0}(t,\lambda_{n}^{0})\right]d\xi+
\\ \nonumber
&&+\int_{0}^{x}K(x,\xi)\sum_{n=-N}^{N}\frac{1}{\pi}\varphi_{0}(\xi,\lambda_{n}^{0})
\tilde{\varphi}_{0}(\xi,\lambda_{n}^{0})d\xi- \\ \nonumber
&&-\sum_{n=-N}^{N}\frac{1}{\alpha_{n}}\varphi(x,\lambda_{n})\int_{0}^{t}\tilde{\varphi}
(\xi,\lambda_{n})\tilde{H}(t,\xi)d\xi \nonumber
\end{eqnarray}
or
\begin{equation}\label{32}
\Phi_{n}(x,t)=I_{N1}(x,t)+I_{N2}(x,t)+I_{N3}(x,t)+I_{N4}(x,t),
\end{equation} 
where
\[
\Phi_{n}(x,t):=\sum_{n=-N}^{N}\left[\frac{1}{\alpha_{n}}\varphi(x,\lambda_{n})\tilde{\varphi}(t,\lambda_{n})-
\frac{1}{\pi}\varphi_{0}(x,\lambda_{n}^{0})\tilde{\varphi}_{0}(t,\lambda_{n}^{0})\right],
\]
\[
I_{N1}(x,t):=\sum_{n=-N}^{N}\left[\frac{1}{\alpha_{n}}\varphi_{0}(x,\lambda_{n})\tilde{\varphi}_{0}(t,\lambda_{n})-
\frac{1}{\pi}\varphi_{0}(x,\lambda_{n}^{0})\tilde{\varphi}_{0}(t,\lambda_{n}^{0})\right],
\]
\[
I_{N2}(x,t):=\int_{0}^{x}K(x,\xi)\sum_{n=-N}^{N}\left[\frac{1}{\alpha_{n}}\varphi_{0}
(\xi,\lambda_{n})\tilde{\varphi}_{0}(t,\lambda_{n})-
\frac{1}{\pi}\varphi_{0}(\xi,\lambda_{n}^{0})\tilde{\varphi}_{0}(t,\lambda_{n}^{0})\right]d\xi,
\]
\[
I_{N3}(x,t):=\int_{0}^{x}K(x,\xi)\sum_{n=-N}^{N}\frac{1}{\pi}\varphi_{0}(\xi,\lambda_{n}^{0})
\tilde{\varphi}_{0}(\xi,\lambda_{n}^{0})d\xi,
\]
\[
I_{N4}(x,t):=\sum_{n=-N}^{N}\frac{1}{\alpha_{n}}\varphi(x,\lambda_{n})\int_{0}^{t}
\tilde{\varphi}(\xi,\lambda_{n})\tilde{H}(t,\xi)d\xi.
\] 
Let $f(x)$ be an absolutely continuous function. Then using expansion formula (\ref{16}),
\[
\lim_{N\rightarrow\infty}\int_{0}^{\pi}\Phi_{N}(x,t)f(t)dt=
\lim_{N\rightarrow\infty}\int_{0}^{\pi}\sum_{n=-N}^{N}\frac{1}{\alpha_{n}}
\varphi(x,\lambda_{n})\tilde{\varphi}(t,\lambda_{n})f(t)dt-
\]
\begin{equation}\label{33}
-\lim_{N\rightarrow\infty}\int_{0}^{\pi}\sum_{n=-N}^{N}
\frac{1}{\pi}\varphi_{0}(x,\lambda_{n}^{0})\tilde{\varphi}_{0}(t,\lambda_{n}^{0})f(t)dt=0,
\ \ \ \ \ \ \ \ \ \ \ \ \ \ \ \ \ \ \ \ 
\end{equation}
\begin{eqnarray}\nonumber
\lefteqn{\lim_{N\rightarrow\infty}\int_{0}^{\pi}I_{N1}(x,t)f(t)dt=} \\ \nonumber
&=&\lim_{N\rightarrow\infty}\int_{0}^{\pi}
\sum_{n=-N}^{N}\left[\frac{1}{\alpha_{n}}\varphi_{0}(x,\lambda_{n})\tilde{\varphi}_{0}(t,\lambda_{n})
-\frac{1}{\pi}\varphi_{0}(x,\lambda_{n}^{0})\tilde{\varphi}_{0}(t,\lambda_{n}^{0})\right]f(t)dt \nonumber
\end{eqnarray}
\begin{equation}\label{34}
=\int_{0}^{\pi}F(x,t)f(t)dt, \ \ \ \ \ \ \ \ \ \ \ \ \ \ \ \ \ \ \ \ \ \ \ \ \ \ \ \ \ \ \ \ \ \ \ \ \ \ \ \ \ \ \ \ \ \ 
\ \ \ \ \ \ \ \ \ \ \ \ \ \ \
\end{equation}
\[
\lim_{N\rightarrow\infty}\int_{0}^{\pi}I_{N2}(x,t)f(t)dt=
\lim_{N\rightarrow\infty}\int_{0}^{\pi}\int_{0}^{x}K(x,\xi)\sum_{n=-N}^{N}\left[\frac{1}{\alpha_{n}}
\varphi_{0}(\xi,\lambda_{n})\tilde{\varphi}_{0}(t,\lambda_{n})-\right.
\]
\begin{equation}\label{35}
\left.-\frac{1}{\pi}\varphi_{0}(\xi,\lambda_{n}^{0})\tilde{\varphi}_{0}(t,\lambda_{n}^{0})\right]f(t)d\xi dt
=\int_{0}^{\pi}\left(\int_{0}^{x}K(x,\xi)F(x,\xi)d\xi\right)f(t)dt,
\end{equation}
\begin{eqnarray}\nonumber
\lefteqn{\lim_{N\rightarrow\infty}\int_{0}^{\pi}I_{N3}(x,t)f(t)dt=} \\ \nonumber
&=&\lim_{N\rightarrow\infty}\int_{0}^{\pi}
\int_{0}^{x}K(x,\xi)\sum_{n=-N}^{N}\frac{1}{\pi}\varphi_{0}(\xi,\lambda_{n}^{0})
\tilde{\varphi}_{0}(\xi,\lambda_{n}^{0})f(t)d\xi dt \nonumber
\end{eqnarray}
\begin{equation}\label{36}
=\int_{0}^{x}K(x,\xi)f(\xi)d\xi,\ \ \ \ \ \ \ \ \ \ \ \ \ \ \ \ \ \ \ \ \ \ \ \ \ \ \ \ \ \ \ \ \ \ \ \ \ \ \ \ \ \ \ \ \ \ \
\end{equation}
\begin{eqnarray}\nonumber
\lefteqn{\lim_{N\rightarrow\infty}\int_{0}^{\pi}I_{N4}(x,t)f(t)dt=}\\ \nonumber
&=&\lim_{N\rightarrow\infty}\int_{0}^{\pi}
\left(\sum_{n=-N}^{N}\frac{1}{\alpha_{n}}\varphi(x,\lambda_{n})
\int_{0}^{t}\tilde{\varphi}(\xi,\lambda_{n})\tilde{H}(t,\xi)d\xi\right)f(t)dt \nonumber
\end{eqnarray}
\begin{equation}\label{37}
=\int_{x}^{\pi}H(t,x)f(t)dt. \ \ \ \ \ \ \ \ \ \ \ \ \ \ \ \ \ \ \ \ \ \ \ \ \ \ \ \ \ \ \ \ \ \ \ \ \ \ \ \ \ \ \ \ \
\ \ \ \ \ \
\end{equation}
We put $K(x,t)=H(x,t)=0$ for $x<t.$ Using (\ref{32}), (\ref{33}), (\ref{34}), (\ref{35}), (\ref{36}) and (\ref{37}) 
we obtain
\begin{eqnarray} \nonumber
\lefteqn{\int_{0}^{\pi}F(x,t)f(t)dt+\int_{0}^{\pi}\left(\int_{0}^{x}K(x,\xi)F(\xi,t)d\xi\right)f(t)dt+} \\ \nonumber
&&+\int_{0}^{x}K(x,\xi)f(\xi)d\xi-\int_{x}^{\pi}H(t,x)f(t)dt=0. \nonumber
\end{eqnarray} 
Since $f(x)$ can be chosen arbitrarily,
\[
F(x,t)+K(x,t)+\int_{0}^{x}K(x,\xi)F(\xi,t)d\xi-H(t,x)=0.
\] When $t<x$, this equality implies (\ref{28}).
\end{proof}

\section{Theorem for the Solution of the Inverse Problem}

\begin{lemma} \label{lm1}
For each fixed $x\in (0,\pi]$ main equation (\ref{28}) has a unique solution $K(x,t)\in L_{2}(0,x)$.
\end{lemma}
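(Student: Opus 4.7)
The plan is to apply the Fredholm alternative. Since by (\ref{29}) together with the asymptotics $\lambda_n-n\in\ell_2$, $\alpha_n-\pi\in\ell_2$ of Theorem~\ref{thm1}, the function $F(\xi,t)$ is square-integrable on $(0,x)\times(0,x)$, the integral operator $h\mapsto\int_0^x F(\cdot,\xi)h(\xi)\,d\xi$ is Hilbert--Schmidt on $L_2(0,x;\mathbb{C}^2)$, hence compact. It therefore suffices to show that the corresponding homogeneous equation has only the trivial solution. Reading (\ref{28}) row by row and transposing (using $\tilde F(\xi,t)=F(t,\xi)$), this amounts to proving that if $h\in L_2(0,x;\mathbb{C}^2)$ satisfies
\[
h(t)+\int_0^x F(t,\xi)h(\xi)\,d\xi=0,\qquad 0<t<x,
\]
then $h\equiv 0$.

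My second step would be to derive a quadratic-form identity by multiplying the homogeneous equation on the left by $\tilde h(t)$ and integrating in $t\in(0,x)$. Substituting the spectral expansion (\ref{29}) and interchanging sum and integral (justified by $L_2$-convergence of the defining series), the cross term collapses into $\sum_n\bigl[|c_n|^2/\alpha_n-|d_n|^2/\pi\bigr]$, where
\[
c_n:=\int_0^x\tilde\varphi_0(\xi,\lambda_n)h(\xi)\,d\xi,\qquad d_n:=\int_0^x\tilde\varphi_0(\xi,\lambda_n^0)h(\xi)\,d\xi.
\]
A direct computation shows that $\{\varphi_0(\cdot,\lambda_n^0)\}_{n\in\mathbb{Z}}=\{(\sin nx,-\cos nx)^{T}\}_{n\in\mathbb{Z}}$ is an orthogonal basis of $L_2(0,\pi;\mathbb{C}^2)$ with common squared norm $\pi$ (its linear span contains $(\sin nx,0)$ for $n\geq 1$ and $(0,\cos nx)$ for $n\geq 0$). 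Extending $h$ by zero to $(x,\pi)$ and applying Parseval for this free system yields $\|h\|^2_{L_2(0,x)}=\tfrac{1}{\pi}\sum_n|d_n|^2$. Combined with the previous identity, this gives
\[
\sum_{n=-\infty}^{\infty}\frac{|c_n|^2}{\alpha_n}=0,
\]
and since $\alpha_n>0$ (a sum of squares with $\varphi(\cdot,\lambda_n)\not\equiv 0$), I would conclude that $c_n=0$ for every $n\in\mathbb{Z}$.

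The final step is to translate these vanishing coefficients into orthogonality of $h$ against the complete system $\{\varphi(\cdot,\lambda_n)\}$. Using the inverse transformation operator representation (\ref{31}), namely $\tilde\varphi_0(t,\lambda_n)=\tilde\varphi(t,\lambda_n)+\int_0^t\tilde\varphi(s,\lambda_n)\tilde H(t,s)\,ds$, and swapping the order of integration, one finds $c_n=\int_0^\pi\tilde\varphi(s,\lambda_n)\,g(s)\,ds$, where
\[
g(s):=h(s)+\int_s^\pi\tilde H(t,s)h(t)\,dt
\]
and $h$ is understood as its zero extension. Completeness of $\{\varphi(\cdot,\lambda_n)\}$ in $L_2(0,\pi;\mathbb{C}^2)$ (Theorem~\ref{thm2}(a)) forces $g\equiv 0$, and since the defining relation for $g$ is a Volterra equation of the second kind, it has only the trivial $L_2$-solution; hence $h\equiv 0$.

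The main obstacle I anticipate is precisely this last step: bridging orthogonality against the ``free'' vectors $\varphi_0(\cdot,\lambda_n)$ (which is what the quadratic form naturally delivers) and orthogonality against the eigenfunctions $\varphi(\cdot,\lambda_n)$ (which is where completeness is available). The transformation operator $K$ together with its inverse $H$ provides exactly this bridge, but care is needed with the matrix transposes and with the interchange of integrations. A secondary technical point is the justification of the termwise integration in the quadratic-form computation, which rests on the $\ell_2$-asymptotics of $\lambda_n$ and $\alpha_n$.
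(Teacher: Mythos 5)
Your proof is correct and follows essentially the same route as the paper: reduce to the homogeneous equation, form the quadratic identity from (\ref{29}), cancel against the Parseval equality of the free system $\{\varphi_0(\cdot,\lambda_n^0)\}$, and use $\alpha_n>0$ to force all coefficients $c_n$ to vanish. Your final step is in fact slightly more careful than the paper's, which concludes by asserting completeness of $\{\varphi_0(\cdot,\lambda_n)\}$ citing Theorem \ref{thm2}(a) (a statement about $\{\varphi(\cdot,\lambda_n)\}$), whereas you supply the needed bridge explicitly via the transformation operator (\ref{31}) and a Volterra argument.
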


\begin{proof}
It suffices to prove that homogeneous equation
\[
g(t)+\int_{0}^{x}g(s)F(s,t)ds=0
\] has only trivial solution $g(t)=0.$ 
Let $g(t)=\left(g_{1}(t),\ \ g_{2}(t)\right)\in L_{2}(0,x)$ 
be a solution of the above equation and $g(t)=0$ for $t \in (x,\pi)$. Then  
\[
\left(g(t),\ g(t)\right)+\left(\int_{0}^{x}g(s)F(s,t)ds,\ g(t)\right)=0.
\]
Using the expression (\ref{29}), we get
\begin{eqnarray} \nonumber
\lefteqn{\int_{0}^{x}\left(g_{1}^{2}(t)+g_{2}^{2}(t)\right)dt+} \\ \nonumber
&&+\sum_{n=-\infty}^{\infty}\frac{1}{\alpha_{n}} \nonumber
\int_{0}^{x}\left[\int_{0}^{x}\left(g_{1}(s)\sin\lambda_{n}(s)-g_{2}(s)\cos\lambda_{n}(s)\right)ds\right]
g_{1}(t)\sin\lambda_{n}tdt \\ \nonumber
&&-\sum_{n=-\infty}^{\infty}\frac{1}{\alpha_{n}}
\int_{0}^{x}\left[\int_{0}^{x}\left(g_{1}(s)\sin\lambda_{n}(s)-g_{2}(s)\cos\lambda_{n}(s)\right)ds\right]
g_{2}(t)\cos\lambda_{n}tdt \\ \nonumber
&&-\sum_{n=-\infty}^{\infty}\frac{1}{\pi}
\int_{0}^{x}\left[\int_{0}^{x}\left(g_{1}(s)\sin\lambda_{n}^{0}(s)-g_{2}(s)\cos\lambda_{n}^{0}(s)\right)ds\right]
g_{1}(t)\sin\lambda_{n}^{0}tdt \\ \nonumber
&&+\sum_{n=-\infty}^{\infty}\frac{1}{\pi}
\int_{0}^{x}\left[\int_{0}^{x}\left(g_{1}(s)\sin\lambda_{n}^{0}(s)-g_{2}(s)\cos\lambda_{n}^{0}(s)\right)ds\right]
g_{2}(t)\cos\lambda_{n}^{0}tdt  \nonumber
\end{eqnarray}
\[
=\int_{0}^{x}\left(g_{1}^{2}(t)+g_{2}^{2}(t)\right)dt+\sum_{n=-\infty}^{\infty}\frac{1}{\alpha_{n}}
\left(\int_{0}^{x}\left[g_{1}(t)\sin\lambda_{n}t-g_{2}(t)\cos\lambda_{n}t\right]dt\right)^{2} 
\]
\[
-\sum_{n=-\infty}^{\infty}\frac{1}{\pi}
\left(\int_{0}^{x}\left[g_{1}(t)\sin\lambda_{n}^{0}t-g_{2}(t)\cos\lambda_{n}^{0}t\right]dt\right)^{2}=0. 
\]
Thus, it follows from the last relation that
\[
\int_{0}^{x}\left(g_{1}^{2}(t)+g_{2}^{2}(t)\right)dt+\sum_{n=-\infty}^{\infty}\frac{1}{\alpha_{n}}
\left(\int_{0}^{x}g(t)\varphi_{0}(t,\lambda_{n})dt\right)^2-
\]
\[
-\sum_{n=-\infty}^{\infty}\frac{1}{\pi}
\left(\int_{0}^{x}g(t)\varphi_{0}(t,\lambda_{n}^{0})dt\right)^2=0.
\]
 Using Parseval equality,
\[
\left\|g\right\|^{2}=\sum_{n=-\infty}^{\infty}\frac{1}{\pi}
\left(\int_{0}^{x}g(t)\varphi_{0}(t,\lambda_{n}^{0})dt\right)^{2}
\] we have
\[
\sum_{n=-\infty}^{\infty}\frac{1}{\alpha_{n}}\left(\int_{0}^{x}g(t)\varphi_{0}(t,\lambda_{n})dt\right)^{2}=0.
\] Since $\alpha_{n}>0$ and from statement a) in Theorem \ref{thm2}, the system $\left\{\varphi_{0}(t,\lambda_{n})\right\}$, 
$(n\in \mathbb{Z})$ is complete in $L_{2}(0,\pi;\mathbb{C}^{2})$, we obtain $g(t)=0.$
\end{proof}

\begin{theorem} \label{thm4}
Let $L(\Omega(x), h_{1}, h_{2})$ and $\hat{L}(\hat{\Omega}(x), \hat{h}_{1}, \hat{h}_{2})$ 
be two boundary value problems and 
\[
\lambda_{n}=\hat{\lambda}_{n}, \ \ \ \ \ \alpha_{n}=\hat{\alpha}_{n}, \ \ \left(n\in\mathbb{Z}\right).
\] Then 
\[
\Omega(x)=\hat{\Omega}(x) \ a.e.\ on \ \left(0,\pi\right), \ \ h_{1}=\hat{h}_{1}, \ \ h_{2}=\hat{h}_{2}.
\]
\end{theorem}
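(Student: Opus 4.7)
The plan is to deduce the equality of the two problems from the uniqueness of solutions of the main Gelfand--Levitan--Marchenko type equation \eqref{28}, exploiting the fact that the right hand side $F(x,t)$ depends only on the spectral data.

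First, I would observe that by formula \eqref{29} the function $F(x,t)$ depends exclusively on the eigenvalues $\{\lambda_n\}$ and normalizing numbers $\{\alpha_n\}$. Since by hypothesis $\lambda_n=\hat\lambda_n$ and $\alpha_n=\hat\alpha_n$ for all $n\in\mathbb{Z}$, we have $F(x,t)=\hat F(x,t)$ on $(0,\pi)\times(0,\pi)$. Both kernels $K(x,t)$ and $\hat K(x,t)$, associated respectively to $L$ and $\hat L$ via the representation \eqref{3}, satisfy the main equation \eqref{28} with the same free term $F$:
\begin{equation*}
F(x,t)+K(x,t)+\int_{0}^{x}K(x,\xi)F(\xi,t)d\xi=0,\qquad
F(x,t)+\hat K(x,t)+\int_{0}^{x}\hat K(x,\xi)F(\xi,t)d\xi=0.
\end{equation*}
Subtracting and invoking Lemma \ref{lm1} for each fixed $x\in(0,\pi]$, we conclude $K(x,t)=\hat K(x,t)$ for almost every $t\in(0,x)$.

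Next, I would recover the potential from the kernel via the reconstruction formula \eqref{4}, namely $\Omega(x)=K(x,x)B-BK(x,x)$. Since $K=\hat K$, the diagonal traces coincide and therefore $\Omega(x)=\hat\Omega(x)$ almost everywhere on $(0,\pi)$. Once the potentials agree, the solutions $\varphi(x,\lambda)$ and $\hat\varphi(x,\lambda)$ of the system \eqref{1} satisfying the same initial condition \eqref{b} must coincide for every $\lambda$, so $\varphi(x,\lambda)\equiv\hat\varphi(x,\lambda)$.

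It remains to recover $h_1$ and $h_2$. From Proposition \ref{p1}, the common set $\{\lambda_n\}$ determines the characteristic function uniquely, so $\Delta(\lambda)\equiv\hat\Delta(\lambda)$. Writing
\begin{equation*}
\Delta(\lambda)=(\lambda+h_1)\varphi_1(\pi,\lambda)+h_2\varphi_2(\pi,\lambda),\qquad
\hat\Delta(\lambda)=(\lambda+\hat h_1)\varphi_1(\pi,\lambda)+\hat h_2\varphi_2(\pi,\lambda),
\end{equation*}
and subtracting gives $(h_1-\hat h_1)\varphi_1(\pi,\lambda)+(h_2-\hat h_2)\varphi_2(\pi,\lambda)\equiv 0$. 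Using the asymptotics \eqref{11}, which yield $\varphi_1(\pi,\lambda)=\sin\lambda\pi+O(|\lambda|^{-1}e^{|\operatorname{Im}\lambda|\pi})$ and $\varphi_2(\pi,\lambda)=-\cos\lambda\pi+O(|\lambda|^{-1}e^{|\operatorname{Im}\lambda|\pi})$, the functions $\varphi_1(\pi,\cdot)$ and $\varphi_2(\pi,\cdot)$ are linearly independent entire functions, and evaluating along sequences $\lambda\to\infty$ where $\sin\lambda\pi$ or $\cos\lambda\pi$ dominates forces $h_1=\hat h_1$ and $h_2=\hat h_2$.

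The main obstacle I anticipate is purely bookkeeping: justifying that the reconstruction formula \eqref{4} applies for kernels obtained from spectral data alone, and that $K(x,x)$ is well defined from the $L_2$-solution produced by Lemma \ref{lm1} (this typically needs a bit of regularity of $F$ along the diagonal, inherited from the asymptotics in Theorem \ref{thm1}). Apart from this point, the argument is a direct application of the previously established main equation, its unique solvability, and the reconstruction formula, together with the asymptotic linear independence of $\sin\lambda\pi$ and $\cos\lambda\pi$ to separate $h_1$ from $h_2$.
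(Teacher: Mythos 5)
Your proposal is correct and follows essentially the same route as the paper: $F=\hat F$ from the spectral data, $K=\hat K$ by the unique solvability of the main equation (Lemma \ref{lm1}), $\Omega=\hat\Omega$ via (\ref{4}), and hence $\varphi\equiv\hat\varphi$. The only (cosmetic) difference is in the last step: the paper recovers $h_1,h_2$ by showing $\beta_n=\hat\beta_n$ via (\ref{7}) and then using (\ref{6}) at $x=\pi$ together with the initial data (\ref{b}), whereas you compare the two expressions $\Delta(\lambda)=(\lambda+h_1)\varphi_1(\pi,\lambda)+h_2\varphi_2(\pi,\lambda)$ of the common characteristic function (determined by the eigenvalues through Proposition \ref{p1}) and use the asymptotic independence of $\varphi_1(\pi,\cdot)$ and $\varphi_2(\pi,\cdot)$ — both arguments are equally valid.
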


\begin{proof}
According to (\ref{29}), $F(x,t)=\hat{F}(x,t)$. Then, from the main 
equation (\ref{28}), we have $K(x,t)=\hat{K}(x,t)$. It follows from (\ref{4}) that 
$\Omega(x)=\hat{\Omega}(x)$ a.e. on $\left(0,\pi\right)$. Taking into account (\ref{3}),
we find $\varphi(x,\lambda_{n})=\hat{\varphi}(x,\lambda_{n})$. In consideration of (\ref{12}), 
we get $\dot{\Delta}(\lambda_{n})\equiv\widehat{\dot{\Delta}}(\lambda_{n})$ and from 
(\ref{7})$, \ \beta_{n}=\hat{\beta}_{n}$. 
Thus, using (\ref{b}) and (\ref{6}), we obtain 
$h_{1}=\hat{h}_{1}$, \ \ $h_{2}=\hat{h}_{2}$.
\end{proof}

\section{Solution of Inverse Problem}

Let the real numbers $\left\{\lambda_{n},\alpha_{n}\right\}$, $(n\in\mathbb{Z})$ 
of the form (\ref{8}) and (\ref{10}) be given. Using these numbers, we construct the function 
$F(x,t)$ by the formulas (\ref{29}) and determine $K(x,t)$ 
from the main equation (\ref{28}). 

Now, let us construct the functions $\varphi(x,\lambda)$ by the formula (\ref{3}),
$\Omega(x)$ by the formula (\ref{4}), $\Delta(\lambda)$ by the formula (\ref{a}) and $\beta_{n}$
by the formula (\ref{7}) respectively, i.e.,
\[
\varphi(x,\lambda):=\varphi_{0}(x,\lambda)+\int_{0}^{x}K(x,t)\varphi(t,\lambda)dt,
\]
\[
\Omega(x):=K(x,x)B-BK(x,x),
\]
\[
\Delta(\lambda):=-\pi(\lambda_{0}^{2}-\lambda^{2})\prod_{n=1}^{\infty}\frac{(\lambda_{n}^{2}-\lambda^{2})}{n^{2}}, 
\]
\[
\beta_{n}:=\frac{\dot{\Delta}(\lambda_{n})}{\alpha_{n}}\neq 0.
\]
The function $F(x,t)$ can be rewritten as follows:
\[
F(x,t)=\frac{1}{2}\left[a(x-t)+a(x+t)T\right],
\] where 
\[
a(x)=\sum_{n=-\infty}^{\infty}\left[\frac{1}{\alpha_{n}}\left(
\begin{array}{cc}
\cos\lambda_{n}x & -\sin\lambda_{n}x \\
\sin\lambda_{n}x & \cos\lambda_{n}x
\end{array}\right)-\frac{1}{\pi}\left(
\begin{array}{cc}
\cos nx & -\sin nx \\
\sin nx & \cos nx
\end{array}\right)\right]
\] and $T=\left(\begin{array}{cc}
-1 & 0 \\
0 & 1
\end{array}\right)$. It is shown analogously in (\cite{Yur}, Lemma 1.5.4) that the function $a(x)\in W_{2}^{1}[0,2\pi]$.
  
\subsection{Derivation of the Differential Equation}

\begin{theorem}\label{thm5}
The relations hold: 
\begin{equation}\label{38}
B\varphi'(x,\lambda)+\Omega(x)\varphi(x,\lambda)=\lambda\varphi(x,\lambda),
\end{equation}
\begin{equation}\label{39}
\varphi_{1}(0,\lambda)=0, \ \ \ \ \varphi_{2}(0,\lambda)=-1.
\end{equation}
\end{theorem}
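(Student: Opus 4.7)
The initial conditions (\ref{39}) are immediate from the construction: evaluating
\[
\varphi(x,\lambda) = \varphi_0(x,\lambda) + \int_0^x K(x,t)\varphi_0(t,\lambda)dt
\]
at $x=0$ kills the integral, and since $\varphi_0(0,\lambda) = (0,-1)^T$ one reads off $\varphi_1(0,\lambda)=0$ and $\varphi_2(0,\lambda)=-1$. So the real content of the theorem is the differential identity (\ref{38}).

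For (\ref{38}), my plan is to substitute the integral representation into $B\varphi'(x,\lambda) + \Omega(x)\varphi(x,\lambda) - \lambda\varphi(x,\lambda)$, use the elementary identity $B\varphi_0'(x,\lambda) = \lambda\varphi_0(x,\lambda)$ to cancel the free parts, then rewrite $\lambda\int_0^x K(x,t)\varphi_0(t,\lambda)dt$ as $\int_0^x K(x,t)B\varphi_0'(t,\lambda)dt$ and integrate by parts in $t$ to move the derivative off $\varphi_0$. The boundary term at $t=0$ vanishes because $\varphi_0(0,\lambda) = (0,-1)^T$ paired with the first-column condition $K_{11}(x,0) = K_{21}(x,0) = 0$ makes $K(x,0)B\varphi_0(0,\lambda) = 0$, while the boundary term at $t=x$ combines with the $BK(x,x)\varphi_0(x,\lambda)$ produced by differentiating the upper limit and with $\Omega(x)\varphi_0(x,\lambda)$ to give $[BK(x,x)+\Omega(x)-K(x,x)B]\varphi_0(x,\lambda)$, which vanishes by the very definition $\Omega(x) = K(x,x)B - BK(x,x)$ in (\ref{4}). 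What remains is
\[
\int_0^x \bigl[BK'_x(x,t) + K'_t(x,t)B + \Omega(x)K(x,t)\bigr]\varphi_0(t,\lambda)dt = 0,
\]
and since this has to hold for every $\lambda$, the bracketed matrix expression must vanish pointwise by Fourier-type uniqueness against the family $\varphi_0(\cdot,\lambda)$.

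The real work, and the main obstacle I anticipate, is therefore verifying that the $K(x,t)$ produced by Lemma \ref{lm1} actually satisfies both the PDE $BK'_x + K'_tB = -\Omega(x)K$ and the first-column condition. For the PDE I would exploit the fact that $F(x,t)$ inherits from its spectral definition (\ref{29}) the symmetry $BF_x(x,t) + F_t(x,t)B = 0$, which comes term-by-term from pairing $B\varphi_0'(x,\lambda) = \lambda\varphi_0(x,\lambda)$ with the transpose identity $\tilde{\varphi}_0'(t,\lambda)B = -\lambda\tilde{\varphi}_0(t,\lambda)$. Differentiating (\ref{28}) by $B\partial_x$ on the left and then by $\partial_t$ followed by right-multiplication by $B$, adding, integrating by parts against $K$, and invoking (\ref{4}) to absorb the $K(x,x)$ boundary terms, I expect the computation to collapse to
\[
\Psi(x,t) + \int_0^x K(x,\xi)\Psi(\xi,t)d\xi = 0, \qquad \Psi := BK'_x + K'_tB + \Omega K,
\]
at which point the uniqueness clause of Lemma \ref{lm1} forces $\Psi \equiv 0$. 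The first-column condition $K_{11}(x,0) = K_{21}(x,0) = 0$ is read off separately by setting $t=0$ in (\ref{28}) and noting that $\tilde{\varphi}_0(0,\lambda_n) = \tilde{\varphi}_0(0,\lambda_n^0) = (0,-1)$ annihilates the first column of every matrix summand building $F(\xi,0)$.
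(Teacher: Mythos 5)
Your proposal is correct and follows essentially the same route as the paper: the paper likewise derives the first-column condition $K_{11}(x,0)=K_{21}(x,0)=0$ from the main equation at $t=0$ (via $F(x,0)BS=0$), obtains the PDE $BK'_x(x,t)+K'_t(x,t)B+\Omega(x)K(x,t)=0$ by differentiating (\ref{28}), exploiting the symmetry $F'_t(x,t)B+BF'_x(x,t)=0$ and collapsing to the homogeneous main equation $J(x,t)+\int_0^x J(x,\xi)F(\xi,t)\,d\xi=0$, which Lemma \ref{lm1} kills, and then verifies (\ref{38}) by exactly the integration by parts in $t$ that you describe. The one slip is in your displayed collapse equation, where the kernel should be $F(\xi,t)$ acting on $\Psi(x,\cdot)$ rather than $K(x,\xi)$ acting on $\Psi(\cdot,t)$ --- as written it is not the homogeneous equation covered by Lemma \ref{lm1} --- but since you explicitly invoke that lemma's uniqueness clause, this reads as a typo rather than a gap.
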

\begin{proof}
Differentiating on $x$ and $y$ the main equation (\ref{28}) respectively, we get
\begin{equation}\label{40}
F'_{x}(x,t)+K'_{x}(x,t)+K(x,x)F(x,t)+\int_{0}^{x}K'_{x}(x,\xi)F(\xi,t)d\xi=0,
\end{equation} 
\begin{equation}\label{41}
F'_{t}(x,t)+K'_{t}(x,t)+\int_{0}^{x}K(x,\xi)F'_{t}(\xi,t)d\xi=0.
\end{equation}
It follows from (\ref{29}) that
\begin{equation}\label{42}
F'_{t}(x,t)B+BF'_{x}(x,t)=0.
\end{equation}
Since $F(x,0)BS=0$, where $S=\left(\begin{array}{c}
0 \\
-1
\end{array}\right)$, using the main equation (\ref{28}), we obtain
\begin{equation}\label{43}
K(x,0)BS=0,
\end{equation}
or 
\[
K_{11}(x,0)=K_{21}(x,0)=0.
\]
Multiplying the equation (\ref{40}) on the left by $B$, we get
\[
BF'_{x}(x,t)+BK'_{x}(x,t)+BK(x,x)F(x,t)+\int_{0}^{x}BK'_{x}(x,\xi)F(\xi,t)d\xi=0,
\]
and multiplying the equation (\ref{41}) on the right by $B$, we find
\[
F'_{t}(x,t)B+K'_{t}(x,t)B+\int_{0}^{x}K(x,\xi)F'_{t}(\xi,t)Bd\xi=0.
\] 
Adding the last two equalities and using (\ref{42})
\[
BK'_{x}(x,t)+BK(x,x)F(x,t)+\int_{0}^{x}BK'_{x}(x,\xi)F(\xi,t)d\xi=
\]
\begin{equation}\label{44}
=-K'_{t}(x,t)B+\int_{0}^{x}K(x,\xi)BF'_{\xi}(\xi,t)d\xi\equiv I(x,t).
\end{equation}
Integrating by parts in (\ref{44}) and from (\ref{43}),
\begin{equation}\label{45}
I(x,t)=-K'_{t}(x,t)B+K(x,x)BF(x,t)-\int_{0}^{x}K'_{\xi}(x,\xi)BF(\xi,t)d\xi
\end{equation}
is obtained. Substituting (\ref{45}) into (\ref{44}), we have
\[
BK'_{x}(x,t)+BK(x,x)F(x,t)+K'_{t}(x,t)B-K(x,x)BF(x,t)+
\]
\begin{equation}\label{46}
+\int_{0}^{x}\left[BK'_{x}(x,\xi)+K'_{\xi}(x,\xi)B\right]F(\xi,t)d\xi=0.
\end{equation}
Multiplying (\ref{28}) on the left by $\Omega(x)$ in form of (\ref{4}) and adding to (\ref{46}),
\[
BK'_{x}(x,t)+K'_{t}(x,t)B+\Omega(x)K(x,t)+
\]
\begin{equation}\label{47}
+\int_{0}^{x}\left[BK'_{x}(x,\xi)+K'_{\xi}(x,\xi)B+\Omega(x)K(x,\xi)\right]F(\xi,t)d\xi=0
\end{equation}
is obtained. Setting
\[
J(x,t):=BK'_{x}(x,t)+K'_{t}(x,t)B+\Omega(x)K(x,t),
\] we can rewrite equation (\ref{47}) as follows
\begin{equation}\label{48}
J(x,t)+\int_{0}^{x}J(x,\xi)F(\xi,t)d\xi=0.
\end{equation}
According to Lemma \ref{lm1}, homogeneous equation (\ref{47}) has only trivial solution, i.e.,
\begin{equation}\label{49}
BK'_{x}(x,t)+K'_{t}(x,t)B+\Omega(x)K(x,t)=0, \ \ 0<t<x.
\end{equation}

Now, differentiating (\ref{3}) and multiplying on the left by $B$, we have
\[
B\varphi'_{x}(x,\lambda)=\lambda B\left(\begin{array}{c}
\cos\lambda x \\
\sin\lambda x
\end{array}
\right)+BK(x,x)\left(\begin{array}{c}
\sin\lambda x \\
-\cos\lambda x
\end{array}
\right)+ 
\]
\begin{equation}\label{50}
+\int_{0}^{x}BK'_{x}(x,t)\left(\begin{array}{c}
\sin\lambda t \\
-\cos\lambda t
\end{array}
\right)dt. 
\end{equation}
On the other hand, multiplying (\ref{3}) on the left by $\lambda$ and integrating by parts 
and using (\ref{43}), we find
\[
\lambda\varphi(x,\lambda)=\lambda B\left(\begin{array}{c}
\cos\lambda x \\
\sin\lambda x
\end{array}
\right)+K(x,x)B\left(\begin{array}{c}
\sin\lambda x \\
-\cos\lambda x
\end{array}
\right)- 
\]
\begin{equation}\label{51}
-\int_{0}^{x}K'_{t}(x,t)B\left(\begin{array}{c}
\sin\lambda t \\
-\cos\lambda t
\end{array}
\right)dt.
\end{equation}
It follows from (\ref{50}) and (\ref{51}) that 
\begin{eqnarray} \nonumber
\lambda\varphi(x,\lambda)&=&B\varphi'_{x}(x,\lambda)+\left[K(x,x)B-BK(x,x)\right]\left(\begin{array}{c}
\sin\lambda x \\
-\cos\lambda x
\end{array}
\right)- \\ \nonumber
&&-\int_{0}^{x}\left[K'_{t}(x,t)B+BK'_{x}(x,t)\right]\left(\begin{array}{c}
\sin\lambda t \\
-\cos\lambda t
\end{array}
\right)dt. \nonumber
\end{eqnarray} 
Taking into account (\ref{4}) and (\ref{49}),
\[
B\varphi'_{x}(x,\lambda)+\Omega(x)\varphi(x,\lambda)=\lambda\varphi(x,\lambda)
\] is obtained. For $x=0$, from (\ref{3}) we get (\ref{39}). 
\end{proof}

\subsection{Derivation of Parseval Equality}
\begin{theorem} \label{thm6}
For any $g(x)\in L_{2}(0,\pi;\mathbb{C}^{2})$, the following relation holds: 
\begin{equation}\label{52}
\left\|g\right\|_{L_{2}}=\sum_{n=-\infty}^{\infty}\frac{1}{\alpha_{n}}
\left(\int_{0}^{\pi}\tilde{\varphi}(t,\lambda_{n})g(t)dt\right)^{2}.
\end{equation}
\end{theorem}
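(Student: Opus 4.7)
The plan is to reduce the desired Parseval identity for the constructed eigenfunctions $\varphi(x,\lambda_n)$ to the standard Parseval identity for the free system $\varphi_0(x,\lambda_n^0)$ by exploiting the transformation operator representation (3) and the main equation (28). Introduce the Volterra operator $(\mathcal{K}u)(x):=\int_0^x K(x,t)u(t)\,dt$ and the integral operator $(\mathcal{F}u)(x):=\int_0^\pi F(x,t)u(t)\,dt$ on $L_2(0,\pi;\mathbb{C}^2)$. The series (29) shows $F(t,x)=\tilde F(x,t)$, so $\mathcal{F}$ is self-adjoint.

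The heart of the proof is the operator identity
\[
(I+\mathcal{K})(I+\mathcal{F})(I+\mathcal{K}^{*})=I.
\]
I would establish it as follows. The main equation (28) says that the kernel of $(I+\mathcal{K})(I+\mathcal{F})-I=\mathcal{K}+\mathcal{F}+\mathcal{K}\mathcal{F}$ vanishes for $t<x$; thus $(I+\mathcal{K})(I+\mathcal{F})=I+V$, where $V$ has kernel supported in $\{t\geq x\}$. Multiplying on the right by $(I+\mathcal{K}^{*})$ produces an operator whose kernel, minus $I$, is still supported in $\{t\geq x\}$ (since $\mathcal{K}^{*}$ is upper-triangular). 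On the other hand, by self-adjointness of $\mathcal{F}$, the product $(I+\mathcal{K})(I+\mathcal{F})(I+\mathcal{K}^{*})$ is self-adjoint, so its off-diagonal part is simultaneously upper- and lower-triangular and must vanish; hence the product equals $I$.

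Next, given $g\in L_2(0,\pi;\mathbb{C}^2)$, substitute (3) into the coefficient $c_n:=\int_0^\pi\tilde\varphi(t,\lambda_n)g(t)\,dt$ and interchange integration order to obtain $c_n=\int_0^\pi\tilde\varphi_0(t,\lambda_n)h(t)\,dt$, where $h:=(I+\mathcal{K}^{*})g$. Then using the definition (29) of $F$ together with the classical Parseval equality $\sum_n\frac{1}{\pi}\bigl(\int_0^\pi\tilde\varphi_0(t,\lambda_n^0)h(t)\,dt\bigr)^{2}=\|h\|^2$ for the trigonometric system (which is a direct consequence of Fourier series expansion for the components of $h$), I would derive
\[
\sum_{n=-\infty}^{\infty}\frac{c_n^{2}}{\alpha_n}=\|h\|^{2}+\langle\mathcal{F}h,h\rangle=\langle(I+\mathcal{F})h,h\rangle.
\]
Finally, applying the operator identity with $h=(I+\mathcal{K}^{*})g$ gives $\langle(I+\mathcal{F})h,h\rangle=\langle(I+\mathcal{K})^{-1}g,(I+\mathcal{K}^{*})g\rangle=\langle g,g\rangle=\|g\|^{2}$, which is (52).

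The main obstacle is the rigorous justification of the termwise manipulations in the step that expresses $\sum_n c_n^2/\alpha_n$ as $\langle(I+\mathcal{F})h,h\rangle$: the series defining $F$ converges only in a weak/$W_2^1$ sense (as noted after the statement, $a(x)\in W_2^1[0,2\pi]$), so one must either interchange sum and integral under an absolute-continuity hypothesis on $g$ and then extend by density, or interpret the sums via Abel summation along the contours $\Gamma_N$ as in Theorem \ref{thm2}. The triangular argument for the operator identity is delicate as well, since care is needed to show that the two Volterra-type parts of the self-adjoint product $(I+\mathcal{K})(I+\mathcal{F})(I+\mathcal{K}^{*})-I$ cancel on the diagonal, not merely off it.
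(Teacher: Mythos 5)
Your proposal is correct, and its first half coincides with the paper's own proof: both substitute the representation (3) into $c_{n}=\int_{0}^{\pi}\tilde{\varphi}(t,\lambda_{n})g(t)\,dt$ to obtain $c_{n}=\int_{0}^{\pi}\tilde{\varphi}_{0}(t,\lambda_{n})h(t)\,dt$ with $h=(I+\mathcal{K}^{*})g$ as in (53), and both combine the definition (29) of $F$ with the classical Parseval identity for the free system $\{\varphi_{0}(\cdot,n)\}_{n\in\mathbb{Z}}$ to reach $\sum_{n}c_{n}^{2}/\alpha_{n}=\langle(I+\mathcal{F})h,h\rangle$. You diverge in how $\langle(I+\mathcal{F})h,h\rangle=\|g\|^{2}$ is then established. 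The paper introduces the inverse transformation kernel $H$ from (31), asserts without proof the identity (55) relating $\tilde{H}$ to $F$ and $K$, and closes with an explicit kernel computation via (56), (53) and (54). You instead prove the operator identity $(I+\mathcal{K})(I+\mathcal{F})(I+\mathcal{K}^{*})=I$: the main equation (28) makes $(I+\mathcal{K})(I+\mathcal{F})-I$ upper triangular, right multiplication by the upper-triangular $I+\mathcal{K}^{*}$ preserves this, and self-adjointness of the product (from $\tilde{F}(x,t)=F(t,x)$) forces the triangular part to vanish. The two routes are equivalent: your identity says precisely that $(I+\mathcal{K})(I+\mathcal{F})=(I+\mathcal{K}^{*})^{-1}=I+\mathcal{H}^{*}$, which is the paper's (55); so your argument in fact supplies the justification that the paper omits, which is a genuine gain. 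The diagonal issue you flag is harmless: all operators involved are integral operators with square-integrable kernels (recall $F(x,t)=\frac{1}{2}[a(x-t)+a(x+t)T]$ with $a\in W_{2}^{1}$), so a kernel supported on the measure-zero diagonal yields the zero operator. The interchange of sum and integral in the step producing $\langle(I+\mathcal{F})h,h\rangle$ is likewise left implicit in the paper; your proposed remedy (work first with absolutely continuous $g$, then pass to general $g$ by density, both sides of (52) being continuous in $g$) is adequate.
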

\begin{proof}
Denote
\[
Q(\lambda):=\int_{0}^{\pi}\tilde{\varphi}(t,\lambda)g(t)dt.
\] It follows from (\ref{3}) that
\[
Q(\lambda):=\int_{0}^{\pi}\tilde{\varphi}_{0}(t,\lambda)h(t)dt, 
\] where
\begin{equation}\label{53}
h(t)=g(t)+\int_{t}^{\pi}\tilde{K}(s,t)g(s)ds.
\end{equation}
Similarly, in view of (\ref{31}), 
\begin{equation}\label{54}
g(t)=h(t)+\int_{x}^{\pi}\tilde{H}(s,t)h(s)ds,
\end{equation}
where for the kernel $H(x,t)$, we have the identity
\begin{equation}\label{55}
\tilde{H}(x,t)=F(t,x)+\int_{0}^{x}K(x,\xi)F(\xi,t)d\xi.
\end{equation}
Using (\ref{53}), we get
\[
\int_{0}^{\pi}F(x,t)h(t)dt=\int_{0}^{\pi}F(x,t)\left[g(t)+\int_{t}^{\pi}
\tilde{K}(u,t)g(u)du\right]dt=
\]
\[
=\int_{0}^{\pi}\left[F(x,t)+\int_{0}^{t}F(x,u)\tilde{K}(t,u)du\right]g(t)dt
\]
\[
=\int_{0}^{x}\left[F(x,t)+\int_{0}^{t}F(x,u)\tilde{K}(t,u)du\right]g(t)dt+
\]
\[
+\int_{x}^{\pi}\left[F(x,t)+\int_{0}^{t}F(x,u)\tilde{K}(t,u)du\right]g(t)dt.
\]
It follows from (\ref{28}) and (\ref{55}) that
\begin{equation}\label{56}
\int_{0}^{\pi}F(x,t)h(t)dt=\int_{0}^{x}H(x,t)g(t)dt-\int_{x}^{\pi}\tilde{K}(t,x)g(t)dt.
\end{equation}
From (\ref{29}) and Parseval equality, we obtain
\begin{eqnarray}\nonumber
\lefteqn{\int_{0}^{\pi}\left[h_{1}^{2}(t)+h_{2}^{2}(t)\right]dt+\int_{0}^{\pi}\int_{0}^{\pi}\tilde{h}(x)F(x,t)h(t)dtdx=} 
\\ \nonumber
&=&\int_{0}^{\pi}\left[h_{1}^{2}(t)+h_{2}^{2}(t)\right]dt+\sum_{n=-\infty}^{\infty}
\frac{1}{\alpha_{n}}\left(\int_{0}^{\pi}\tilde{\varphi}_{0}(t,\lambda_{n})h(t)dt\right)^{2}- \\ \nonumber
&&-\sum_{n=-\infty}^{\infty}\frac{1}{\pi}
\left(\int_{0}^{\pi}\tilde{\varphi}_{0}(t,\lambda_{n}^{0})h(t)dt\right)^{2} \\ \nonumber
&=&\sum_{n=-\infty}^{\infty}\frac{1}{\alpha_{n}}
\left(\int_{0}^{\pi}\tilde{\varphi}_{0}(t,\lambda_{n})h(t)dt\right)^{2}=
\sum_{n=-\infty}^{\infty}\frac{Q^{2}(\lambda_{n})}{\alpha_{n}}. \nonumber
\end{eqnarray}
Taking into account (\ref{56}), we have
\begin{eqnarray} \nonumber
\lefteqn{\sum_{n=-\infty}^{\infty}\frac{Q^{2}(\lambda_{n})}{\alpha_{n}}=
\int_{0}^{\pi}\left[h_{1}^{2}(t)+h_{2}^{2}(t)\right]dt+} \\ \nonumber
&&+\int_{0}^{\pi}\tilde{h}(x)\left(\int_{0}^{x}H(x,t)g(t)dt\right)dx-
\int_{0}^{\pi}\tilde{h}(x)\left(\int_{x}^{\pi}\tilde{K}(t,x)g(t)dt\right)dx \\ \nonumber
&=&\int_{0}^{\pi}\left[h_{1}^{2}(t)+h_{2}^{2}(t)\right]dt+
\int_{0}^{\pi}\left(\int_{t}^{\pi}\tilde{h}(x)H(x,t)dx\right)g(t)dt- \\ \nonumber
&&-\int_{0}^{\pi}\tilde{h}(x)\left(\int_{x}^{\pi}\tilde{K}(t,x)g(t)dt\right)dx, \nonumber
\end{eqnarray}
whence by formulas (\ref{53}) and (\ref{54}),
\begin{eqnarray} \nonumber
\lefteqn{\sum_{n=-\infty}^{\infty}\frac{Q^{2}(\lambda_{n})}{\alpha_{n}}=
\int_{0}^{\pi}\left[h_{1}^{2}(t)+h_{2}^{2}(t)\right]dt+} \\ \nonumber
&&+\int_{0}^{\pi}\left(\tilde{g}(t)-\tilde{h}(t)\right)g(t)dt-
\int_{0}^{\pi}\tilde{h}(x)\left(h(x)-g(x)\right)dx
\\ \nonumber
&=&\int_{0}^{\pi}\left(g_{1}^{2}(t)+g_{2}^{2}(t)\right)dt \nonumber
\end{eqnarray}
is obtained, i.e., the relation (\ref{52}) is valid.
\end{proof}

\begin{corollary} \label{c1}
For any function $f(x)$ and $g(x)\in L_{2}(0,\pi;\mathbb{C}^{2})$, the relation holds:
\begin{equation}\label{57}
\int_{0}^{\pi}\tilde{g}(x)f(x)dx=\sum_{n=-\infty}^{\infty}\frac{1}{\alpha_{n}}
\left(\int_{0}^{\pi}\tilde{g}(t)\varphi(t,\lambda_{n})dt\right)\left(\int_{0}^{\pi}
\tilde{\varphi}(t,\lambda_{n})f(t)dt\right).
\end{equation}
\end{corollary}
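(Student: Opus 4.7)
The corollary is a polarization identity derived from Theorem \ref{thm6}. The plan is to apply the Parseval equality (\ref{52}) separately to the two functions $g+f$ and $g-f$, and then subtract.

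First, I observe that the bilinear form $\int_{0}^{\pi}\tilde{g}(x)f(x)\,dx$ obeys the usual expansion
\[
\|g\pm f\|_{L_{2}}^{2}=\|g\|_{L_{2}}^{2}\pm 2\int_{0}^{\pi}\tilde{g}(x)f(x)\,dx+\|f\|_{L_{2}}^{2},
\]
since $\tilde{(g\pm f)}(g\pm f)=\tilde{g}g\pm 2\tilde{g}f+\tilde{f}f$ (note that $\tilde{g}f=\tilde{f}g$ as scalar quantities, so the first factor $\int_{0}^{\pi}\tilde{g}(t)\varphi(t,\lambda_{n})dt$ appearing in the statement coincides with $\int_{0}^{\pi}\tilde{\varphi}(t,\lambda_{n})g(t)dt$). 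Subtracting yields the polarization identity
\[
4\int_{0}^{\pi}\tilde{g}(x)f(x)\,dx=\|g+f\|_{L_{2}}^{2}-\|g-f\|_{L_{2}}^{2}.
\]

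Next, I would set $A_{n}:=\int_{0}^{\pi}\tilde{\varphi}(t,\lambda_{n})g(t)dt$ and $B_{n}:=\int_{0}^{\pi}\tilde{\varphi}(t,\lambda_{n})f(t)dt$ and apply (\ref{52}) from Theorem \ref{thm6} to $g+f$ and $g-f$ in turn, obtaining
\[
\|g+f\|_{L_{2}}^{2}=\sum_{n=-\infty}^{\infty}\frac{(A_{n}+B_{n})^{2}}{\alpha_{n}},\qquad \|g-f\|_{L_{2}}^{2}=\sum_{n=-\infty}^{\infty}\frac{(A_{n}-B_{n})^{2}}{\alpha_{n}}.
\]
Subtracting term by term and using the elementary identity $(A_{n}+B_{n})^{2}-(A_{n}-B_{n})^{2}=4A_{n}B_{n}$ gives
\[
\|g+f\|_{L_{2}}^{2}-\|g-f\|_{L_{2}}^{2}=4\sum_{n=-\infty}^{\infty}\frac{A_{n}B_{n}}{\alpha_{n}}.
\]
Combining this with the polarization identity and dividing by $4$ yields exactly (\ref{57}).

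There is essentially no obstacle beyond bookkeeping: the only point requiring a brief check is the identification of $\int_{0}^{\pi}\tilde{g}(t)\varphi(t,\lambda_{n})dt$ with $\int_{0}^{\pi}\tilde{\varphi}(t,\lambda_{n})g(t)dt$ (and likewise for $f$), which is immediate from the definition $\tilde{\cdot}$ as transpose, and the convergence of the combined series, which is ensured by applying Parseval separately to $g\pm f\in L_{2}(0,\pi;\mathbb{C}^{2})$.
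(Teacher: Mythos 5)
Your polarization argument is correct and is exactly the (implicit) route the paper intends: the corollary is stated without proof as an immediate consequence of Theorem \ref{thm6}, obtained by applying (\ref{52}) to $g\pm f$ and subtracting. The only caveat, which is really the paper's and not yours, is that (\ref{52}) and (\ref{57}) are written with squares and plain products rather than moduli and conjugates, so the two-term real polarization you use is the appropriate one here; for genuinely complex-valued $f,g$ one would need the four-term polarization identity.
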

\begin{lemma} \label{lm2}
For any $f(x)\in W_{2}^{1}[0,\pi]$, the expansion formula
\begin{equation}\label{58}
f(x)=\sum_{n=-\infty}^{\infty}c_{n}\varphi(x,\lambda_{n})
\end{equation}
is valid, where
\[
c_{n}=\frac{1}{\alpha_{n}}\int_{0}^{\pi}\tilde{\varphi}(x,\lambda_{n})f(x)dx.
\]
\end{lemma}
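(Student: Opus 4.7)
The plan is to reduce the expansion in $\{\varphi(\cdot,\lambda_n)\}$ to a classical trigonometric Fourier series via the transformation operator~(\ref{3}), and then identify the limit using Corollary~\ref{c1} and completeness. For $f\in W_{2}^{1}[0,\pi;\mathbb{C}^{2}]$, substituting
\[
\tilde{\varphi}(t,\lambda_n)=\tilde{\varphi}_0(t,\lambda_n)+\int_{0}^{t}\tilde{\varphi}_0(s,\lambda_n)\tilde{K}(t,s)\,ds
\]
into $\alpha_n c_n=\int_{0}^{\pi}\tilde{\varphi}(t,\lambda_n)f(t)\,dt$ and interchanging the order of integration yields
\[
\alpha_n c_n=\int_{0}^{\pi}\tilde{\varphi}_0(t,\lambda_n)h(t)\,dt,\qquad h(t):=f(t)+\int_{t}^{\pi}\tilde{K}(s,t)f(s)\,ds,
\]
with $h\in W_{2}^{1}$ (inherited regularity of the kernel). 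Applying~(\ref{3}) once more on the partial sum gives the clean factorization
\[
S_N(x):=\sum_{|n|\le N}c_n\varphi(x,\lambda_n)=\sigma_N(x)+\int_{0}^{x}K(x,t)\sigma_N(t)\,dt,\qquad \sigma_N(y):=\sum_{|n|\le N}c_n\varphi_0(y,\lambda_n),
\]
so the convergence of $S_N$ is controlled by that of $\sigma_N$.

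The next step is to compare $\sigma_N$ with the ``unperturbed'' partial sum
\[
\sigma_N^{0}(y):=\sum_{|n|\le N}\frac{1}{\pi}\varphi_0(y,n)\int_{0}^{\pi}\tilde{\varphi}_0(t,n)h(t)\,dt,
\]
which, since $\varphi_0(y,n)=(\sin ny,-\cos ny)^{T}$, is precisely the trigonometric Fourier expansion of $h$ and converges to $h$ in $L_{2}(0,\pi;\mathbb{C}^{2})$. The perturbative difference $\sigma_N-\sigma_N^{0}$ decomposes according to the two replacements $\alpha_n\leftrightarrow\pi$ and $\lambda_n\leftrightarrow n$; both are controlled by Cauchy--Schwarz using $\{\tau_n\},\{\epsilon_n\}\in\ell_{2}$ from Theorem~\ref{thm1}, together with the extra decay $\int_{0}^{\pi}\tilde{\varphi}_0(t,n)h(t)\,dt=O(1/n)$ obtained by integrating by parts against $h'\in L_{2}$. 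Consequently $\sigma_N$ converges in $L_{2}$, and hence so does $S_N$, to some limit $F\in L_{2}(0,\pi;\mathbb{C}^{2})$.

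To identify $F$ with $f$, I would invoke Corollary~\ref{c1} with $g:=\varphi(\cdot,\lambda_m)$: the formula (\ref{57}) reads
\[
\int_{0}^{\pi}\tilde{\varphi}(t,\lambda_m)f(t)\,dt=\sum_{n}A_{mn}\,c_n,\qquad A_{mn}:=\int_{0}^{\pi}\tilde{\varphi}(t,\lambda_m)\varphi(t,\lambda_n)\,dt.
\]
The $L_{2}$-convergence of $S_N$ justifies termwise integration against $\tilde{\varphi}(\cdot,\lambda_m)$, producing $\int_{0}^{\pi}\tilde{\varphi}(t,\lambda_m)F(t)\,dt=\sum_{n}A_{mn}c_n$. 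Thus $F-f$ is orthogonal (in the $L_{2}$-pairing) to every $\varphi(\cdot,\lambda_m)$, and the completeness of $\{\varphi(\cdot,\lambda_n)\}$ in $L_{2}(0,\pi;\mathbb{C}^{2})$---which in the reconstructed setting follows from (\ref{3}) combined with the completeness of the slightly perturbed trigonometric system $\{\varphi_0(\cdot,\lambda_n)\}$ guaranteed by Theorem~\ref{thm1}---yields $F=f$. The main technical obstacle is the uniform control of the perturbation error $\sigma_N-\sigma_N^{0}$ in the second step: while each $\ell_{2}$-perturbation contributes a small piece, summing them against the Fourier coefficients of $h$ requires a delicate Abel-summation-type argument, and it is precisely the $W_{2}^{1}$-regularity assumption on $f$ that supplies the additional decay needed to make these estimates go through.
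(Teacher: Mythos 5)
Your argument is correct in substance, but it reaches the convergence of the series by a genuinely different route than the paper. The paper does not pass through the transformation operator at all: it uses the reconstructed differential equation of Theorem \ref{thm5} to rewrite $\tilde{\varphi}(x,\lambda_{n})=\lambda_{n}^{-1}\left[-\frac{\partial}{\partial x}\tilde{\varphi}(x,\lambda_{n})B+\tilde{\varphi}(x,\lambda_{n})\Omega(x)\right]$ inside the integral defining $c_{n}$ and integrates by parts, obtaining $c_{n}=-\frac{1}{\alpha_{n}\lambda_{n}}\left[\tilde{\varphi}(\pi,\lambda_{n})Bf(\pi)-\tilde{\varphi}(0,\lambda_{n})Bf(0)\right]+\frac{1}{\alpha_{n}\lambda_{n}}\int_{0}^{\pi}\tilde{\varphi}(x,\lambda_{n})\left[Bf'(x)+\Omega(x)f(x)\right]dx$; the factor $\lambda_{n}^{-1}$ gained this way is exactly what the $W_{2}^{1}$ hypothesis buys, and the paper concludes $\{c_{n}\}\in l_{2}$ and absolute, uniform convergence of $\sum c_{n}\varphi(x,\lambda_{n})$. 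You instead push everything through (\ref{3}) to the perturbed trigonometric system and control $\sigma_{N}-\sigma_{N}^{0}$ via the $l_{2}$ estimates of Theorem \ref{thm1}; this Riesz-basis-type perturbation argument works, but as written it only delivers $L_{2}$ convergence, which is weaker than what the paper obtains (and, incidentally, for mere $L_{2}$ convergence the $W_{2}^{1}$ assumption and your unproved claim $h\in W_{2}^{1}$ are not even needed --- $h\in L_{2}$ suffices there). For the identification step both proofs rest on Corollary \ref{c1}: the paper simply takes $g$ arbitrary in (\ref{57}), interchanges sum and integral, and concludes $f=f^{*}$ at once, which spares the extra completeness argument for $\{\varphi(\cdot,\lambda_{m})\}$ in the reconstructed setting that you supply; your version with $g=\varphi(\cdot,\lambda_{m})$ is valid (and your completeness justification via (\ref{3}) and the perturbed system $\{\varphi_{0}(\cdot,\lambda_{n})\}$ is consistent with what the paper itself uses in Lemma \ref{lm1}), but it is an avoidable detour, since the $L_{2}$ convergence of $S_{N}$ already legitimizes termwise integration against an arbitrary $g\in L_{2}$.
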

\begin{proof}
Consider the series 
\begin{equation}\label{59}
f^{*}(x)=\sum_{n=-\infty}^{\infty}c_{n}\varphi(x,\lambda_{n}),
\end{equation}
where
\begin{equation}\label{60}
c_{n}:=\frac{1}{\alpha_{n}}\int_{0}^{\pi}\tilde{\varphi}(x,\lambda_{n})f(x)dx.
\end{equation}
Using Theorem \ref{6} and integrating by parts , we get
\[
c_{n}=\frac{1}{\alpha_{n}\lambda_{n}}\int_{0}^{\pi}\left[-\frac{\partial}{\partial x}
\tilde{\varphi}(x,\lambda_{n})B+\tilde{\varphi}(x,\lambda_{n})\Omega(x)\right]f(x)dx
\]
\[
=-\frac{1}{\alpha_{n}\lambda_{n}}\left[\tilde{\varphi}(\pi,\lambda_{n})Bf(\pi)-
\tilde{\varphi}(0,\lambda_{n})Bf(0)\right]+
\]
\[
+\frac{1}{\alpha_{n}\lambda_{n}}\int_{0}^{\pi}\tilde{\varphi}(x,\lambda_{n})
\left[Bf'(x)+\Omega(x)f(x)\right]dx.
\]
Applying the asymptotic formulas in Theorem \ref{1}, we find $\left\{c_{n}\right\}\in l_{2}.$ Consequently
the series (\ref{59}) converges absolutely and uniformly on $[0,\pi]$. According to (\ref{57}) and (\ref{60}), we have
\[
\int_{0}^{\pi}\tilde{g}(x)f(x)dx=\sum_{n=-\infty}^{\infty}\frac{1}{\alpha_{n}}
\left(\int_{0}^{\pi}\tilde{g}(t)\varphi(t,\lambda_{n})dt\right)\left(\int_{0}^{\pi}
\tilde{\varphi}(t,\lambda_{n})f(t)dt\right)
\]
\[
=\sum_{n=-\infty}^{\infty}c_{n}\left(\int_{0}^{\pi}\tilde{g}(t)\varphi(t,\lambda_{n})dt\right)
=\int_{0}^{\pi}\tilde{g}(t)\left(\sum_{n=-\infty}^{\infty}c_{n}\varphi(t,\lambda_{n})\right)
=\int_{0}^{\pi}\tilde{g}(t)f^{*}(t)dt.
\] Since $g(x)$ is arbitrary, $f(x)=f^{*}(x)$ is obtained, i.e., the expansion formula (\ref{58}) is found.
\end{proof}

\subsection{Derivation of Boundary Condition}

\begin{lemma} \label{lm4}
The following equality holds:
\begin{equation}\label{61}
\sum_{n=-\infty}^{\infty}\frac{\varphi(x,\lambda_{n})}{\alpha_{n}\beta_{n}}=0.
\end{equation}
\end{lemma}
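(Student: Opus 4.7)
The plan is to recognize, via the identity (\ref{7}), that $\alpha_{n}\beta_{n}=\dot{\Delta}(\lambda_{n})$, so the claim reduces to showing
\[
\sum_{n=-\infty}^{\infty}\frac{\varphi(x,\lambda_{n})}{\dot{\Delta}(\lambda_{n})}=0.
\]
This is a Mittag--Leffler/residue identity: the terms are exactly the residues at the simple poles $\lambda_{n}$ of the meromorphic vector-valued function $\lambda\mapsto\varphi(x,\lambda)/\Delta(\lambda)$ (all poles simple for $|n|$ large by Theorem \ref{thm1}(i)). So the strategy is to evaluate the contour integral around $\Gamma_{N}$ and show it vanishes as $N\to\infty$.

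First, I would apply the residue theorem on the circle $\Gamma_{N}=\{|\lambda|=N+\tfrac12\}$:
\[
\sum_{|\lambda_{n}|<N+\frac{1}{2}}\frac{\varphi(x,\lambda_{n})}{\dot{\Delta}(\lambda_{n})}
=\frac{1}{2\pi i}\oint_{\Gamma_{N}}\frac{\varphi(x,\lambda)}{\Delta(\lambda)}\,d\lambda,
\]
valid for $N$ large enough that the contour avoids all zeros of $\Delta$ and encloses the (finitely many) non-simple ones without issue.

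Next, I would estimate the integrand on $\Gamma_{N}$. From the asymptotics (\ref{11}) one obtains $|\varphi(x,\lambda)|\leq C\,e^{|\mathrm{Im}\,\lambda|x}$ uniformly in $x\in[0,\pi]$, and (\ref{22}) (which follows from (\ref{13}) and (\ref{14}), since $\Gamma_{N}\subset G_{\delta}$ for $\delta<\tfrac12$) gives $|\Delta(\lambda)|\geq C_{\delta}|\lambda|\,e^{|\mathrm{Im}\,\lambda|\pi}$ on $\Gamma_{N}$. Combining,
\[
\left|\frac{\varphi(x,\lambda)}{\Delta(\lambda)}\right|
\leq \frac{C}{C_{\delta}\,|\lambda|}\,e^{-|\mathrm{Im}\,\lambda|(\pi-x)},\qquad\lambda\in\Gamma_{N}.
\]

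Finally, parametrizing $\lambda=(N+\tfrac12)e^{i\theta}$, so that $|\mathrm{Im}\,\lambda|=(N+\tfrac12)|\sin\theta|$ and $|d\lambda|=(N+\tfrac12)d\theta$, the contour integral is bounded by
\[
\frac{C}{C_{\delta}}\int_{0}^{2\pi}e^{-(N+\frac{1}{2})(\pi-x)|\sin\theta|}\,d\theta,
\]
which tends to $0$ as $N\to\infty$ for every fixed $x\in[0,\pi)$ by dominated convergence (the integrand is bounded by $1$ and converges pointwise to $0$ off the null set $\theta\in\{0,\pi\}$). This forces the partial sums of residues to tend to $0$, yielding (\ref{61}).

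The main obstacle is the borderline case $x=\pi$, where the exponential factor $e^{-|\mathrm{Im}\,\lambda|(\pi-x)}$ degenerates to $1$ and the naive estimate no longer gives decay of the contour integral. Here I would sharpen the bound on $\varphi(\pi,\lambda)$ by using the explicit relation $\Delta(\lambda)=(\lambda+h_{1})\varphi_{1}(\pi,\lambda)+h_{2}\varphi_{2}(\pi,\lambda)$ together with the refined asymptotics of $\varphi(\pi,\lambda)$ obtained from (\ref{3}) and the Riemann--Lebesgue type estimates on $K(\pi,\cdot)$, extracting an extra $|\lambda|^{-1}$ factor; alternatively, (\ref{61}) at $x=\pi$ follows from the $x<\pi$ version by the continuity of both sides in $x$, once absolute/uniform convergence of the series on $[0,\pi]$ is verified using $\{1/(\alpha_{n}\beta_{n})\}\in\ell_{1}$ (from $\alpha_{n}=\pi+o(1)$ and $\dot{\Delta}(\lambda_{n})\sim\pi n\cos n\pi$).
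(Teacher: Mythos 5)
Your proof follows the same route as the paper's: rewrite $\alpha_{n}\beta_{n}$ as $\dot{\Delta}(\lambda_{n})$ via (\ref{7}), identify the summands as residues of $\varphi(x,\lambda)/\Delta(\lambda)$, and make the contour integral over $\Gamma_{N}$ vanish using the lower bound (\ref{22}) on $\left|\Delta(\lambda)\right|$ together with the upper bound on $\left|\varphi(x,\lambda)\right|$ coming from (\ref{11}). You are in fact more careful than the paper, which silently ignores the borderline case $x=\pi$ where the factor $e^{-\left|Im\lambda\right|(\pi-x)}$ degenerates and the naive estimate only bounds the integral by a constant; your proposed fix (refined asymptotics of $\varphi(\pi,\lambda)$, or passing to $x=\pi$ by continuity after checking summability) addresses a real gap in the published argument.
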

\begin{proof}
Using residue theorem, we get
\begin{equation}\label{62}
\sum_{n=-\infty}^{\infty}\frac{\varphi(x,\lambda_{n})}{\alpha_{n}\beta_{n}}=
\sum_{n=-\infty}^{\infty}\frac{\varphi(x,\lambda_{n})}{\dot{\Delta}(\lambda_{n})}=
\sum_{n=-\infty}^{\infty}\underset{\lambda=\lambda_{n}}{Res}\frac{\varphi(x,\lambda)}{\Delta(\lambda)}
=\frac{1}{2\pi i}\int_{\Gamma_{N}}\frac{\varphi(x,\lambda)}{\Delta(\lambda)}d\lambda,
\end{equation} 
where $\Gamma_{N}=\left\{\lambda:\left|\lambda\right|=N+\frac{1}{2}\right\}$. From (\ref{a}) and (\cite{Mar}, Lemma 3.4.2),
\begin{equation}\label{71}
\Delta(\lambda)=\lambda\sin\lambda\pi+O(e^{\left|Im\lambda\right|\pi}). 
\end{equation}
We denote $G_{\delta}=\left\{\lambda:\left|\lambda-n\right|\geq\delta, \ \ n=0,\pm 1,\pm 2...\right\}$
for some small fixed $\delta>0$ and $\left|\sin\lambda\pi\right|\geq C_{\delta}e^{\left|Im\lambda\right|\pi}$,
$\lambda\in G_{\delta}$, where $C_{\delta}$ positive number. Therefore, we have
\[
\left|\Delta(\lambda)\right|\geq C_{\delta}\left|\lambda\right|e^{\left|Im\lambda\right|\pi}, \ \ \lambda\in G_{\delta}.
\] Using this inequality and (\ref{11}), we obtain (\ref{61}). 
\end{proof}
\begin{theorem} \label{thm7}
The following relation is valid:
\begin{equation}\label{63}
\left(\lambda_{n}+h_{1}\right)\varphi_{1}(\pi,\lambda_{n})+h_{2}\varphi_{2}(\pi,\lambda_{n})=0.
\end{equation}
\end{theorem}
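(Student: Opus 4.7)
The plan is to prove (\ref{63}) by combining a Wronskian identity for an auxiliary second solution with the expansion formula of Lemma \ref{lm2}. First, introduce $\psi(x,\lambda_n)$ as the unique solution of the reconstructed Dirac equation $B\psi'+\Omega(x)\psi=\lambda_n\psi$ satisfying the terminal data $\psi(\pi,\lambda_n)=(h_2,\,-\lambda_n-h_1)^T$. Since both $\varphi(\cdot,\lambda_n)$ and $\psi(\cdot,\lambda_n)$ solve the same first-order system and the matrix $B^{-1}(\lambda_n I-\Omega)$ has zero trace (because $\operatorname{tr} B^{-1}=0$ and $\operatorname{tr}(B^{-1}\Omega)=0$ by the explicit form of $B$ and $\Omega$), the Wronskian
\[
W(x):=\varphi_2(x,\lambda_n)\psi_1(x,\lambda_n)-\varphi_1(x,\lambda_n)\psi_2(x,\lambda_n)
\]
is independent of $x\in[0,\pi]$.

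Evaluating $W$ at $x=\pi$ using the terminal data yields $W=(\lambda_n+h_1)\varphi_1(\pi,\lambda_n)+h_2\varphi_2(\pi,\lambda_n)$, which is precisely the quantity in (\ref{63}) whose vanishing is to be shown. Evaluating $W$ at $x=0$ via $\varphi(0,\lambda_n)=(0,-1)^T$ from (\ref{39}) yields $W=-\psi_1(0,\lambda_n)$. Thus (\ref{63}) reduces to the single claim $\psi_1(0,\lambda_n)=0$.

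To prove this, I apply Lemma \ref{lm2} to $\psi(\cdot,\lambda_n)$, which lies in $W_2^1[0,\pi]$ since $\psi'=B^{-1}(\lambda_n I-\Omega)\psi\in L_2$ (the reconstructed $\Omega$ is in $L_2$ and $\psi$ is continuous on $[0,\pi]$). The resulting expansion $\psi(x,\lambda_n)=\sum_{m}c_m\,\varphi(x,\lambda_m)$ converges absolutely and uniformly on $[0,\pi]$, so in particular it holds at $x=0$; since $\varphi(0,\lambda_m)=(0,-1)^T$ for every $m$, the first coordinate of each summand vanishes, forcing $\psi_1(0,\lambda_n)=0$. The only delicate point is verifying that Lemma \ref{lm2} is applicable to $\psi$ and that its uniform convergence permits evaluation at the endpoint $x=0$; once these regularity checks are in place, the Wronskian reduction closes the argument immediately.
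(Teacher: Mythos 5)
Your Wronskian reduction is correct as far as it goes: for two solutions of $By'+\Omega y=\lambda_n y$ the Wronskian is constant (the coefficient matrix $B^{-1}(\lambda_n I-\Omega)$ is indeed trace-free), and evaluating at $x=\pi$ and $x=0$ shows that (\ref{63}) is equivalent to $\psi_1(0,\lambda_n)=0$ for your auxiliary solution. But the proof has two genuine gaps, one of setup and one of substance. First, at this stage of the reconstruction the constants $h_1,h_2$ do not yet exist: the data are only $\{\lambda_n,\alpha_n\}$, and the content of Theorem \ref{thm7} is precisely that there \emph{exist} constants $h_1,h_2$ for which (\ref{63}) holds simultaneously for all $n$ (the paper manufactures them at the very end of its proof, setting $h_2:=\mu_2$, $h_1:=-\mu_4$). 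By prescribing $\psi(\pi,\lambda_n)=(h_2,\,-\lambda_n-h_1)^{T}$ you have assumed the objects you are supposed to produce, so the argument is circular; nothing in it explains why a single pair $(h_1,h_2)$ works for every $n$ at once.

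Second, even granting $h_1,h_2$, the step that closes your argument is invalid: you conclude $\psi_1(0,\lambda_n)=0$ by evaluating the expansion of Lemma \ref{lm2} term by term at $x=0$ and noting $\varphi_1(0,\lambda_m)=0$. That reasoning, applied verbatim to an arbitrary $f\in W_2^1[0,\pi]$ (say $f\equiv(1,0)^{T}$), would force $f_1(0)=0$, which is false. The resolution is that the expansion in Lemma \ref{lm2} is an identity in $L_2$; uniform convergence up to the endpoint, which is what pointwise evaluation at $x=0$ requires, fails exactly for functions that do not already satisfy the boundary condition $f_1(0)=0$. So the "only delicate point" you flag is in fact where the whole burden of the proof sits, and it cannot be discharged this way. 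The paper's route is entirely different: it uses Lemma \ref{lm4} and the orthogonality of $\{\varphi(\cdot,\lambda_n)\}$ to show that $\beta_n^2(\alpha_n-a_{nn})$ is a constant $H$, derives the identity $D_kE_n-E_kD_n=H(\lambda_k-\lambda_n)$ for $D_n=\beta_n\varphi_1(\pi,\lambda_n)$, $E_n=\beta_n\varphi_2(\pi,\lambda_n)$, deduces by an elementary four-index algebraic argument that $D_n$ and $E_n$ are affine in $\lambda_n$, and pins down the coefficients from the asymptotics of Theorem \ref{thm1} — this is what simultaneously produces $h_1,h_2$ and proves (\ref{63}).
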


\begin{proof}
From (\ref{61}), we can write for any $n_{0}\in\mathbb{Z}$
\begin{equation}\label{64}
\frac{\varphi(x,\lambda_{n_{0}})}{\alpha_{n_{0}}}=-\sum_{\stackrel{n=-\infty}{n\neq n_{0}}}^{\infty}
\frac{\beta_{n_{0}}\varphi(x,\lambda_{n})}{\alpha_{n}\beta_{n}}
\end{equation}
Let $m\neq n_{0}$ be any fixed number and $f(x)=\varphi(x,\lambda_{k})$. Then substituting (\ref{64}) in (\ref{58})
\[
\varphi(x,\lambda_{k})=\sum_{\stackrel{n=-\infty}{n\neq n_{0}}}^{\infty}c_{nk}\varphi(x,\lambda_{n}),
\] where
\[
c_{nk}=\frac{1}{\alpha_{n}}\int_{0}^{\pi}\left[\tilde{\varphi}(t,\lambda_{n})-\frac{\beta_{n_{0}}}{\beta_{n}}
\tilde{\varphi}(t,\lambda_{n_{0}})\right]\varphi(t,\lambda_{k})dt.
\] The system of functions $\{\varphi_{0}(x,\lambda_{n})\}$, $\left(n\in\mathbb{Z}\right)$ 
is orthogonal in $L_{2}(0,\pi;\mathbb{C}^{2})$.
Then by (\ref{3}), the system of functions $\{\varphi(x,\lambda_{n})\},$ $\left(n\in\mathbb{Z}\right)$ 
is orthogonal in $L_{2}(0,\pi;\mathbb{C}^{2})$ as well. 
Therefore, $c_{nk}=\delta_{nk},$ where $\delta_{nk}$ is Kronecker delta. Let us define 
\begin{equation}\label{65}
a_{nk}:=\int_{0}^{\pi}\tilde{\varphi}(t,\lambda_{n})\varphi(t,\lambda_{k})dt. 
\end{equation} 
Using this expression, we have for $n\neq k$
\begin{equation}\label{66}
a_{kk}-\frac{\beta_{n}}{\beta_{k}}a_{nk}=\alpha_{k}.
\end{equation}
It follows from (\ref{65}) that $a_{nk}=a_{kn}$. Taking into account this equality and (\ref{66}),  
\[
\beta_{k}^{2}\left(\alpha_{k}-a_{kk}\right)=\beta_{n}^{2}\left(\alpha_{n}-a_{nn}\right)=H, \ \ \ k\neq n,
\] where $H$ is a constant.
Then, we have
\[
\int_{0}^{\pi}\tilde{\varphi}(t,\lambda_{n})\varphi(t,\lambda_{n})dt=\alpha_{n}-\frac{H}{\beta_{n}^{2}}
\]
and
\[
\int_{0}^{\pi}\tilde{\varphi}(t,\lambda_{k})\varphi(t,\lambda_{n})dt=-\frac{H}{\beta_{k}\beta_{n}}, \ \ k\neq n.
\] 
It is easily obtained that for $k\neq n$, 
\[
\int_{0}^{\pi}\left[\varphi_{1}(x,\lambda_{k})\varphi_{1}(x,\lambda_{n})+
\varphi_{2}(x,\lambda_{k})\varphi_{2}(x,\lambda_{n})\right]dx=
\]
\[
=\frac{1}{(\lambda_{k}-\lambda_{n})}\left[\varphi_{2}(\pi,\lambda_{k})\varphi_{1}(\pi,\lambda_{n})
-\varphi_{1}(\pi,\lambda_{k})\varphi_{2}(\pi,\lambda_{n})\right]
=-\frac{H}{\beta_{k}\beta_{n}}.
\] According to the last equation, for $n\neq k,$
\begin{equation}\label{67}
\beta_{k}\varphi_{2}(\pi,\lambda_{k})\beta_{n}\varphi_{1}(\pi,\lambda_{n})
-\beta_{k}\varphi_{1}(\pi,\lambda_{k})\beta_{n}\varphi_{2}(\pi,\lambda_{n})=-H{(\lambda_{k}-\lambda_{n})}.
\end{equation}
We denote 
\begin{equation}\label{69}
D_{n}:=\beta_{n}\varphi_{1}(\pi,\lambda_{n}), \ \ \ \  
E_{n}:=\beta_{n}\varphi_{2}(\pi,\lambda_{n}).
\end{equation}
Then, we can rewrite equation (\ref{67}) as follows
\begin{equation}\label{68}
D_{k}E_{n}-E_{k}D_{n}=H{(\lambda_{k}-\lambda_{n})}, \ \ \ n\neq k.
\end{equation} 
Let $i,\ j,\ k, \ n$ be pairwise distinct integers, then we get
\[
\begin{array}{c}
D_{k}E_{n}-E_{k}D_{n}=H{(\lambda_{k}-\lambda_{n})}, \\
D_{n}E_{i}-E_{n}D_{i}=H{(\lambda_{n}-\lambda_{i})}, \\
D_{i}E_{k}-E_{i}D_{k}=H{(\lambda_{i}-\lambda_{k})}.
\end{array}
\] Adding them together, we find
\[
D_{n}(E_{i}-E_{k})+E_{n}(D_{k}-D_{i})=E_{i}D_{k}-D_{i}E_{k}.
\] In this equation, replacing $n$ by $j$, we get another equation
\[
D_{j}(E_{i}-E_{k})+E_{j}(D_{k}-D_{i})=E_{i}D_{k}-D_{i}E_{k}.
\] Subtracting the last two equation,
\[
(D_{n}-D_{j})(E_{i}-E_{k})=(D_{i}-D_{k})(E_{n}-E_{j}).
\] 
In the case of $E_{n}=E_{j}$, for some $n,\ j\in\mathbb{Z}$, then $E_{n}=$const. From (\ref{68}), 
$D_{n}=\mu_{1}\lambda_{n}+\mu_{2}$. In the case of $E_{n}\neq E_{j}$, then
we obtain $D_{n}=\mu_{1}\lambda_{n}+\mu_{2}$ and $E_{n}=\mu_{3}\lambda_{n}+\mu_{4}$, 
where in both cases $\mu_{1},\ \mu_{2},\ \mu_{3},\ \mu_{4}$ are constant.
Therefore, using these relation in (\ref{69}), we find
\[
\beta_{n}\varphi_{1}(\pi,\lambda_{n})=\mu_{1}\lambda_{n}+\mu_{2}, \ \ \
\beta_{n}\varphi_{2}(\pi,\lambda_{n})=\mu_{3}\lambda_{n}+\mu_{4}.
\]
Using 
\[
\varphi_{1}(\pi,\lambda_{n})=O\left(\frac{1}{n}\right), \ \ \ 
\varphi_{2}(\pi,\lambda_{n})=(-1)^{n+1}+O\left(\frac{1}{n}\right), 
\] $\lambda_{n}=n+O\left(\frac{1}{n}\right)$ and $\beta_{n}=n(-1)^{n}+O(1)$ derived from (\ref{7}) and (\ref{71}), 
we obtain $\mu_{1}=0$, $\mu_{3}=-1$. Denoting $h_{2}:=\mu_{2}$ and $h_{1}:=-\mu_{4}$, 
\[
h_{2}\varphi_{2}(\pi,\lambda_{n})=-\left(\lambda_{n}+h_{1}\right)\varphi_{1}(\pi,\lambda_{n}), \ \ n\in\mathbb{Z}
\] 
is obtained and it follows from (\ref{68}) that $H=h_{2}$.
\end{proof}

Thus, we have proved the following theorem:

\begin{theorem}
For the sequences $\left\{\lambda_{n}, \alpha_{n}\right\}$, $\left(n\in\mathbb{Z}\right)$, to be the spectral data for a
certain boundary value problem $L(\Omega(x), h_{1}, h_{2})$ of the form (\ref{1}), (\ref{2}) with $\Omega(x)\in L_{2}(0,\pi)$
it is necessary and sufficient that the relations (\ref{8}) and (\ref{10}) hold.
\end{theorem}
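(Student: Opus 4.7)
The plan is to combine necessity (already established) with a constructive sufficiency argument that ties together the machinery built up in Sections 4, 5, and 6.

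For necessity, I would appeal directly to Theorem \ref{thm1}: if the sequences $\{\lambda_n,\alpha_n\}$ arise from a problem $L(\Omega,h_1,h_2)$ with $\Omega\in L_2(0,\pi)$, then parts (i) and (iii) of Theorem \ref{thm1} give precisely the asymptotics (\ref{8}) and (\ref{10}). So the real content is sufficiency.

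For sufficiency, I would start from given sequences $\{\lambda_n,\alpha_n\}$ satisfying (\ref{8}) and (\ref{10}) and execute the reconstruction algorithm of Section 6 step by step, at each stage invoking the theorems that were prepared for exactly this purpose. First, form $F(x,t)$ by (\ref{29}); the asymptotics (\ref{8}) and (\ref{10}) ensure that the defining series converges and, via the representation $F(x,t)=\frac{1}{2}[a(x-t)+a(x+t)T]$, that $a\in W_2^1[0,2\pi]$. Next, Lemma \ref{lm1} guarantees that for each fixed $x\in(0,\pi]$ the main equation (\ref{28}) has a unique solution $K(x,\cdot)\in L_2(0,x)$; I would note that the proof of Lemma \ref{lm1} used only the general form of $F$ derived from the spectral-data asymptotics, so it applies in this direction too. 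Then define $\varphi(x,\lambda)$, $\Omega(x)$, $\Delta(\lambda)$, $\beta_n$ by the formulas listed at the start of Section 6. The regularity of $a$ transfers to $K(x,x)$, which gives $\Omega\in L_2(0,\pi)$ through (\ref{4}).

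At this point I would verify that the constructed objects form a genuine boundary value problem of the required type with the prescribed spectral data. Theorem \ref{thm5} yields the Dirac system (\ref{38}) together with the initial conditions (\ref{39}), so the constructed $\varphi(x,\lambda)$ is the solution associated with $\Omega$. Theorem \ref{thm7} produces real constants $h_1,h_2$ (with $h_2>0$, read off from the asymptotics $\beta_n=n(-1)^n+O(1)$) such that the boundary relation (\ref{63}) holds at every $\lambda_n$; this is exactly $V(\varphi(\cdot,\lambda_n))=0$, so each $\lambda_n$ is an eigenvalue of $L(\Omega,h_1,h_2)$ and $\varphi(x,\lambda_n)$ is an eigenfunction. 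By Proposition \ref{p1} the characteristic function of this constructed problem coincides (up to the Hadamard product expansion) with the $\Delta(\lambda)$ written down from the $\lambda_n$, so the constructed $\lambda_n$ are all the eigenvalues. Finally, Theorem \ref{thm6} (Parseval equality with the prescribed weights $1/\alpha_n$) forces the normalizing numbers of $L(\Omega,h_1,h_2)$ to equal the prescribed $\alpha_n$, because if they differed the two Parseval identities would contradict completeness of $\{\varphi(x,\lambda_n)\}$ in $L_2(0,\pi;\mathbb{C}^2)$.

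The main obstacle I anticipate is the interplay between the reconstruction and the verification: one must be careful that the constructed $\Omega$ really lies in $L_2$ and that the $\alpha_n$ used in (\ref{29}) end up being the normalizing numbers computed from the newly constructed $\varphi$. The first point is handled by the $W_2^1$-regularity of $a$ (cited from \cite{Yur}, Lemma 1.5.4) via (\ref{4}); the second is the subtle part, and it is resolved precisely by the derived Parseval equality (\ref{52}), which was proved starting only from the given $\alpha_n$ rather than from any a priori notion of normalizing number. Once these two points are in place, the necessity side of Theorem \ref{thm4} (uniqueness from spectral data) shows that $L(\Omega,h_1,h_2)$ is the unique problem with the prescribed spectral data, completing sufficiency.
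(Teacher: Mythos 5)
Your proposal is correct and follows essentially the same route as the paper: necessity via Theorem \ref{thm1}, and sufficiency by running the Section~6 reconstruction and verifying it with Lemma \ref{lm1}, Theorem \ref{thm5} (differential equation and initial conditions), Theorem \ref{thm6} (Parseval equality, hence the prescribed $\alpha_n$ are the normalizing numbers), and Theorem \ref{thm7} (boundary condition), which is precisely how the paper assembles its proof. The two subtleties you flag --- the $W_2^1$ regularity of $a$ giving $\Omega\in L_2(0,\pi)$, and the derived Parseval equality identifying the $\alpha_n$ --- are exactly the points the paper relies on.
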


Algorithm of the construction of the function $\Omega(x)$ by spectral data $\left\{\lambda_{n},\alpha_{n}\right\}$,
$\left(n\in\mathbb{Z}\right)$ follows from the proof of the theorem: \\
1) By the given numbers $\left\{\lambda_{n},\alpha_{n}\right\}$,
$\left(n\in\mathbb{Z}\right)$ the function $F(x,t)$ is constructed by formula (\ref{29}), \\
2) The function $K(x,t)$ is found from equation (\ref{28}), \\
3) $\Omega(x)$ calculated by the formula (\ref{4}).

\begin{acknowledgement}
This work is supported by The Scientific and Technological Research Council
of Turkey (T\"{U}B\.{I}TAK).
\end{acknowledgement}


\end{document}